\definecolor{dgreen}{rgb}{0.00,0.49,0.00}
\definecolor{dblue}{rgb}{0,0.08,0.75}
\newtheorem{theorem}{Theorem}
\newtheorem{lemma}[theorem]{Lemma}
\newtheorem{proposition}[theorem]{Proposition}
\newtheorem{definition}[theorem]{Definition}
\newcommand{\R}{\mathbb{R}}   
\newcommand{\N}{\mathbb{N}}   
\renewcommand{\P}{\mathcal{P}} 
\renewcommand{\l}{\ell}  
\renewcommand{\P}{\mathcal{P}}
\newcommand{\AC}{AC_{\rm loc}(\R)}
\newcommand{\Lp}{L^p_{\rm loc}(\R)}
\newcommand{\Ll}{L^1_{\rm loc}(\R)}
\newcommand{\A}{{\mathcal A\hspace{0.2ex}}}
\newcommand{\F}{{\mathcal F}_0(\R)}
\DeclareMathOperator{\VI}{VI}
\title[Dynamic equilibria in fluid queueing networks]{Dynamic equilibria in fluid queueing networks}
\author[R. Cominetti]{Roberto Cominetti}
\author[J.~R. Correa]{Jos\'e R. Correa}
\author[O. Larr\'e]{Omar Larr\'e}
\dedicatory{Departamento de Ingenier\'{\i}a Industrial, Universidad de Chile\\ {\tt \url{{rccc,jcorrea,olarre}@dii.uchile.cl}}}
\date{January 2014}
\keywords{Dynamic equilibrium, flows over time, fluid queues}
\thanks{Supported by N\'ucleo Milenio Informaci\'on y Coordinaci\'on en Redes ICM/FIC P10-024F
and FONDECYT 1100046. An extended abstract of a preliminary version of this paper appeared as \cite{ccl}.}
\begin{document}

\maketitle
\begin{abstract}
This paper continues the study of equilibria for flows over time in the fluid queueing model recently 
considered by Koch and Skutella \cite{KochSk:2}. We provide a constructive proof for the existence and 
uniqueness of equilibria in the case of a single origin-destination with piecewise constant inflow rates, 
through a detailed analysis of the static flows obtained as derivatives of a dynamic equilibrium. 
We also give a nonconstructive existence proof of equilibria when the inflow rates belong to $L^p$ 
including the extension to multiple origin-destinations.
\end{abstract}

\section{Introduction}
Understanding time varying flows over networks is relevant in contexts where a steady
state is rarely observed such as urban traffic or the Internet. Frequently, these systems are characterized by a 
lack of coordination among the participating agents and have to be considered from a game theoretic perspective.

Research in flows over time was initially focused in optimization. The first to consider such questions
in a discrete time setting were Ford and Fulkerson \cite{FyF:2,FyF:1} who designed an algorithm
to compute a flow-over-time carrying the maximum possible flow from a source to a sink in a given timespan. Gale
\cite{Gale:1} then showed the existence of a flow pattern that achieves this optimum simultaneously 
for all time horizons. These results were extended to continuous time by Fleischer and Tardos
\cite{Tardos:1}, and Anderson and Philpott \cite{Anderson:1}, respectively. We refer
to Skutella \cite{Skutella:1} for an excellent survey.

The study of flows over time when flow particles act selfishly has mostly been considered in the 
transportation literature. The seminal paper  by Friesz, Bernstein, Smith, Tobin, and Wie \cite{fri} 
(see also the book \cite[Ran and Boyce]{RanBoyce}) proposed a general framework in the form of a 
variational inequality for which, unfortunately, little is known in terms of existence, uniqueness 
and characterization of solutions. Under suitable assumptions, an existence result  
was eventually obtained by Zhu and Marcotte \cite{Zhu}.
Also, Meunier and Wagner \cite{meu} establish the existence of dynamic equilibria using an 
alternative specification of the model and exploiting general results for games with a continuum of players. 
Recently Koch and Skutella \cite{KochSk:2} studied a more specific model, which can be traced back to
 Vickrey \cite{Vickrey}, in which there is an inflow stream at a single source that travels across 
the network towards a sink through edges that are characterized by a travel time or latency
and a {\em per-time-unit} capacity. The model is a fluid approximation of a queueing system and 
can be seen as a special case of Friesz {\em et al.}'s framework, though it does not satisfy the 
assumptions required for the existence result of Zhu and Marcotte. This fluid queueing model was recently 
considered by Bhaskar, Fleischer and Anshelevich \cite{BFA} to investigate the price-of-anarchy in Stackleberg 
routing games. A more detailed comparison with the related literature is postponed until \S\ref{related-work}.

\medskip\noindent{\em Our Contribution.}
This paper considers flows over time for the fluid queueing model as in Koch and Skutella \cite{KochSk:2}, 
and is an outgrowth of our preliminary work \cite{ccl,lar}. We provide a constructive (algorithmic) 
proof for the existence and uniqueness of equilibria, exploiting the key concept
of {\em thin flow with resetting} introduced by Koch and Skutella: a static flow together with an associated 
labeling that characterize the time derivatives of an equilibrium. We actually consider a slightly more 
restrictive definition by adding a normalization condition. Using a fixed point formulation we show
that normalized thin flows exist, and then we prove that the labeling is unique.
As a by-product, this yields an exponential-time algorithm to compute a normalized thin flow
and shows that this problem belongs to the complexity class PPAD, 
though we conjecture that it might be solvable in polynomial time.
By integrating these thin flows we deduce the existence of an equilibrium for the case 
of a piecewise constant inflow rate, and we show that the equilibrium is unique within a natural family
of flows over time. Finally, we give a non-constructive existence proof when the inflow rates belongs 
to the space of $p$-integrable functions $L^p$ with $1<p<\infty$, and we discuss how the result extends to multiple origin-destination pairs.

\medskip\noindent{\em Organization of the paper.} Section \S \ref{model} describes the fluid queueing
model for flows over time. Section \S \ref{section:TFR} 
characterizes the time derivatives of a dynamic equilibrium using the notion of normalized thin flows with 
resetting, and proves the existence and uniqueness of the latter. In
\S \ref{section:EU} we exploit the previous results to give a constructive proof for the existence of an equilibrium 
in the case of a piecewise constant inflow rate, and we discuss the uniqueness of this equilibrium. In \S \ref{section:Lp} we 
present a non-constructive existence result for more general inflow rates, including the case of multiple origin-destinations.
Finally, in \S \ref{related-work} we compare our findings with previous results in the literature and state some open questions. 
Appendix \S\ref{L1AC} at the end summarizes some technical facts used in the paper.

\section{A fluid queue model for dynamic routing games}
\label{model}

Throughout this paper we  consider a network  $\mathcal{N}=(G,\nu,\tau, s, t, u) $ consisting of a directed graph $G$ with node
set $V$ and edge set $E$, a vector  $(\nu_e)_{e \in E}$ of positive numbers representing 
queue service rates, a vector $(\tau_e)_{e \in E}$  of nonnegative numbers representing link
travel times,  a source $s \in V$, a sink $t \in V$, and an inflow rate function $u:\R\!\to\R_+$ taken
from the set $\F$ of non-negative and locally integrable functions 
which vanish on the negative axis, that is $u(\theta)=0$ for a.e. $\theta<0$. We denote 
$U(\theta)\!=\!\int_0^\theta\! u(\xi)d\xi$ the {\em cumulative inflow} so that $U\!\in\!\AC$, the space of locally 
absolutely continuous functions.
For the precise definition of these functional spaces and some of its basic properties we refer to \S\ref{L1AC}
and \cite[Chapter 3]{leo}.

A continuous stream of particles is injected at the source $s$ at a time-dependent rate $u(\theta)$, and 
flows through the network towards the sink $t$. Particles arriving to an edge $e$ join a queue with 
service rate $\nu_e$ and, after leaving the queue, travel along the edge to reach its head after $\tau_e$ time units.  
Each infinitesimal inflow particle is interpreted as a player that seeks to complete its journey in the least 
possible time, so that equilibrium occurs when each particle travels along an $s$-$t$ 
shortest path. The relevant edge costs for a particle entering 
the network at time $\theta$, must consider the 
queueing delays induced by other particles  along its path by the time at which each edge is reached. 
This introduces intricate spatial and temporal dependencies among the flows that enter the network at different 
times, possibly at future dates if overtaking occurs. 

The rest of this section makes these notions more precise. 
For simplicity, and without loss of generality, we assume  that there is at most one edge between 
any pair of nodes in $G$, that there are no loops, and that for each node $v\in V$ there is a path from $s$ to $v$.
An edge $e \in E$ from node $v$ to node $w$ is written $vw$, while the forward and backward stars
of a node $v\in V$ are denoted $\delta^+(v)$ and $\delta^-(v)$. We also suppose that the sum of latencies along 
any cycle is positive, namely $\sum_{e \in \mathcal{C}}\tau_e > 0$ for every cycle $\mathcal{C}$ in $G$.

\subsection{Flows over time} \label{subsection:model}
The model is stated in terms of the flow rates on every edge.
A {\em flow-over-time} is a pair $f\!=\!(f^+,f^-)$ of arrays of  functions 
$f^+_e,f^-_e\in\F$ for each $e \in E$, representing  the rate at which flow enters the tail of $e$ 
and the rate of flow leaving the head of $e$ respectively.
The {\em cumulative inflow}, {\em cumulative outflow}
and {\em queue size} are defined as the $\AC$ functions
$$
\begin{array}{ccl}
F^+_e(\theta)&=&\mbox{$\int_0^{\theta}f^+_e (\xi)\,d\xi$,}\\[1ex]
F^-_e(\theta)&=&\mbox{$\int_0^{\theta} f^-_e (\xi)\,d\xi$,}\\[1ex]
z_e(\theta)&=&F^+_e (\theta) - F^-_e (\theta+\tau_e).
\end{array}
$$
Note that the expression for the queue size $z_e(\theta)$ accounts for the time 
$\tau_e$ required to reach the head of the link after leaving the queue.
We say that $f$ is {\em feasible} if for almost all $\theta$ the following are satisfied
\begin{itemize}
\item {\em capacity constraints:} $f^-_e(\theta) \leq \nu_e$ for each $e \in E$,
\item {\em non-deficit constraints:} $z_e(\theta) \geq 0$ for each $e \in E$,
\item {\em flow conservation constraints:}
\begin{align}\label{eq:flow_cons}
\sum_{e \in \delta^+(v)} f^+_e (\theta) -\!\!\!\!\sum_{e \in \delta^-(v)} f^-_e (\theta) 
=\left\{\begin{array}{cl}
u(\theta)&\quad\mbox{ for $v=s$ }\\[1ex]
0&\quad\mbox{ for $v \in V \setminus \{ s,t \}$}.
\end{array}
\right.
\end{align}
\end{itemize}

\subsection{Queue dynamics.}
We assume that queues {\em operate at capacity}, that is to say, for almost all $\theta$ we have 
\begin{equation}\label{con:exp}
\mbox{  }f_e^-(\theta+\tau_e)=\left\{\begin{array}{cl}
\nu_e&\mbox{if }z_e(\theta)>0,\\
\min\{f_e^+(\theta),\nu_e\}&\mbox{otherwise.}
\end{array}\right.
\end{equation}
This condition can be equivalently stated in terms of the queue length dynamics
\begin{equation}\label{dam0}
z'_e(\theta)=\left\{
\begin{array}{ll}
~f_e^+(\theta)\!-\!\nu_e&\mbox{if }z_e(\theta)>0\\ 
{}[f_e^+(\theta)\!-\!\nu_e]_+&\mbox{otherwise,}
\end{array}
\right.
\end{equation}
whose unique solution is given by
(see {\em e.g.} \cite[section \S 1.3]{pra})
\begin{equation}\label{dam}
z_e(\theta)=\max_{u\in[0,\theta]}\int_u^\theta\!\![f_e^+(\xi)-\nu_e]d\xi.
\end{equation}
This formula shows that  the inflow $f_e^+$ completely determines the queue length $z_e$,
and then the outflow $f_e^-$ is also uniquely determined by \eqref{con:exp}. 

The {\em queueing delay} experienced by a particle entering $e$ at time $\theta$ 
before it starts traversing the edge is defined as 
\begin{align}
q_e(\theta)=\min \{ q \geq 0 :\int_\theta^{\theta+q}\!\!\!f_e^- (\xi+\tau_e)\,d\xi=z_e(\theta) \}. \label{def:q}
\end{align}
We denote $W_e^\theta=[\theta,\theta+q_e(\theta))$ the interval on which the particle
waits in the queue and $Q_e=\{\theta:z_e(\theta)>0\}$ the instants at which the queue is nonempty. 
We observe that for all $\xi\in W_e^\theta$ the queue remains nonempty since
$$z_e(\xi)=z_e(\theta)+\int_\theta^{\xi}\![f_e^+(\xi)\!-\!f_e^-(\xi\!+\!\tau_e)]\,d\xi\geq 
z_e(\theta)-\int_\theta^{\xi}\!\!f_e^-(\xi\!+\!\tau_e)\,d\xi>0$$
and therefore  $Q_e=\cup_\theta W_e^\theta$.

\begin{proposition}
 \label{proposition:condQequiv}
A queue operates at capacity if and only if the following three conditions hold simultaneously 
\begin{itemize}
    \item[(a)]  Capacity constraints: $f_e^-(\theta)\leq\nu_e$ for almost all $\theta$,
    \item[(b)] Non-deficit constraints: $z_e(\theta)\geq 0$ for all $\theta$,
    \item[({}c)] Queueing delay: $q_e(\theta)=z_e(\theta)/\nu_e$ for all $\theta$.
\end{itemize}
\end{proposition}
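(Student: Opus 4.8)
The plan is to prove the two implications of the equivalence separately, each time translating the operate-at-capacity condition \eqref{con:exp} --- which by the discussion above coincides with the queue-length dynamics \eqref{dam0}, with explicit solution \eqref{dam} --- into statements about $f_e^-$ and $z_e$.

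For the forward implication, assume the queue operates at capacity. Part (a) is immediate, since both branches $\nu_e$ and $\min\{f_e^+(\theta),\nu_e\}$ of \eqref{con:exp} are at most $\nu_e$. For part (b), $z_e$ now solves \eqref{dam0}, hence is given by \eqref{dam}, and taking $u=\theta$ in the maximum yields $z_e(\theta)\ge 0$ for $\theta\ge 0$; for $\theta\le 0$ one reads off from \eqref{con:exp} that $f_e^-$ vanishes on $(-\infty,\tau_e)$, so that $z_e\equiv 0$ there. For part (c), fix $\theta$: by the observation preceding the statement the queue stays nonempty on $W_e^\theta=[\theta,\theta+q_e(\theta))$, so \eqref{con:exp} forces $f_e^-(\xi+\tau_e)=\nu_e$ for a.e.\ $\xi\in W_e^\theta$; integrating over $W_e^\theta$ and using the defining property \eqref{def:q} of $q_e(\theta)$ gives $z_e(\theta)=\int_\theta^{\theta+q_e(\theta)}f_e^-(\xi+\tau_e)\,d\xi=\nu_e\,q_e(\theta)$, i.e.\ $q_e(\theta)=z_e(\theta)/\nu_e$ (the case $z_e(\theta)=0$ being trivial).

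For the converse, assume (a)--(c); I will verify that \eqref{con:exp} holds a.e., distinguishing the sign of $z_e(\theta)$. On $Q_e=\{z_e>0\}$: for any $\theta$ with $z_e(\theta)>0$, (c) together with \eqref{def:q} gives $\int_\theta^{\theta+z_e(\theta)/\nu_e}f_e^-(\xi+\tau_e)\,d\xi=z_e(\theta)=\int_\theta^{\theta+z_e(\theta)/\nu_e}\nu_e\,d\xi$, so by (a) the a.e.\ nonnegative function $\nu_e-f_e^-(\cdot+\tau_e)$ has zero integral, hence vanishes a.e., on $[\theta,\theta+z_e(\theta)/\nu_e]$. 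Since $z_e$ is continuous and positive on $Q_e$, the open intervals $(\theta,\theta+z_e(\theta)/\nu_e)$ over all such $\theta$ cover $Q_e$ (any $\xi\in Q_e$ lies in one of them once $\theta<\xi$ is close enough to $\xi$, since $\theta+z_e(\theta)/\nu_e\to\xi+z_e(\xi)/\nu_e>\xi$ as $\theta\uparrow\xi$); extracting a countable subcover, I conclude $f_e^-(\theta+\tau_e)=\nu_e$ for a.e.\ $\theta\in Q_e$, which is the first branch of \eqref{con:exp}. On the complement $\{z_e\le 0\}$, which by (b) equals $\{z_e=0\}$: the derivative of the $\AC$ function $z_e$ vanishes a.e.\ on this level set (see \S\ref{L1AC}), so $f_e^+(\theta)-f_e^-(\theta+\tau_e)=z_e'(\theta)=0$ a.e.\ there, and combining with (a) we get $f_e^-(\theta+\tau_e)=f_e^+(\theta)=\min\{f_e^+(\theta),\nu_e\}$, the second branch of \eqref{con:exp}.

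The step I expect to require the most care is this covering argument on $Q_e$: one cannot pass from ``$f_e^-(\cdot+\tau_e)=\nu_e$ a.e.\ on each $[\theta,\theta+z_e(\theta)/\nu_e]$'' to ``$f_e^-(\cdot+\tau_e)=\nu_e$ a.e.\ on $Q_e$'' by a bare uncountable union of null sets, which is precisely why the continuity of $z_e$ (to obtain a genuine open cover) and the passage to a countable subcover enter. Everything else reduces to the equivalence \eqref{con:exp}$\Leftrightarrow$\eqref{dam0}$\Leftrightarrow$\eqref{dam} recalled above and to direct manipulation of the definitions of $z_e$ and $q_e$.
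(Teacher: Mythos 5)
Your proof is correct and follows essentially the same route as the paper's: forward direction by reading off (a) and (b) from \eqref{con:exp} and \eqref{dam}, and establishing (c) via the nonemptiness of the queue on $W_e^\theta$; converse by integrating \eqref{def:q} against (a) on $W_e^\theta$ for the first branch of \eqref{con:exp} and using the level-set derivative fact for the second branch. The one place you deviate is the covering step in the converse: the paper simply applies Lemma~\ref{sorgenfrey} to the half-open intervals $W_e^\theta$ and their union $Q_e$, whereas you re-derive this for your intervals by hand via a Lindel\"of (countable-subcover) argument, which works but replicates what Lemma~\ref{sorgenfrey} was stated to provide. For the level-set step you appeal to ``$z_e'=0$ a.e.\ on $\{z_e=0\}$,'' which in the paper is packaged as Lemma~\ref{lemma:sac}\,(a)$\Rightarrow$(c); note that the paper's lemma is the precise form needed and is the one to cite, since its hypotheses ($z\in\AC$, $z\equiv 0$ on $\R_-$) are exactly your situation.
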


\begin{proof} Suppose the queue operates at capacity. From \eqref{con:exp} 
we clearly have (a) while \eqref{dam} implies (b).
To prove ({}c) we observe that $\int_\theta^{\theta+q}\!f_e^-(\xi\!+\!\tau_e)\,d\xi \leq \nu_e q$ 
from which it follows that $q_e(\theta)\geq z_e(\theta)/\nu_e$. 
On the other hand, since the queue remains nonempty on $W_e^\theta$
condition \eqref{con:exp} implies $f_e^-(\xi+\tau_e)=\nu_e$ a.e. $\xi\in W_e^\theta$ and then
$$\int_\theta^{\theta+z_e(\theta)/\nu_e}\!\!\!\!\!\!f_e^-(\xi+\tau_e)\,d\xi = z_e(\theta)$$ 
which yields $q_e(\theta)=z_e(\theta)/\nu_e$.

Conversely, suppose (a)-({}c). From ({}c) we get
$\int_\theta^{\theta+q_e(\theta)}[f_e^-(\xi+\tau_e)-\nu_e]\,d\xi=0$
so that (a) gives $f_e^-(\xi+\tau_e)=\nu_e$ for almost all $\xi\in W_e^\theta$, and 
Lemma \ref{sorgenfrey} implies that this equality holds a.e. on 
$\cup_{\theta}W_e^\theta=Q_e$ proving the first case of \eqref{con:exp}.
For the second case, (b) and Lemma \ref{lemma:sac}:(a)$\Rightarrow$({}c) give that
almost everywhere $z_e(\theta)=0$ implies $0=z_e'(\theta)=f_e^+(\theta)\!-\!f_e^-(\theta\!+\!\tau_e)$ 
and therefore $f_e^-(\theta\!+\!\tau_e)=\min\{f_e^+(\theta),\nu_e\}$.
\end{proof}

\subsection{Link travel times}
The time at which a particle exits from an edge $e$ can be computed as the sum of the entrance time $\theta$, 
plus queueing delay, plus latency, {\em i.e.}  
\begin{align}\label{etf}
T_e(\theta)=\mbox{$\theta+\frac{z_e(\theta)}{\nu_e}+\tau_e$.}
\end{align}
For notational convenience we omit the dependence of $T_e$ on the flow $f$.
Clearly $T_e\in\AC$ and using \eqref{dam0} we can compute its derivative almost everywhere as
\begin{equation}\label{der}
T'_e(\theta)=
\left\{\begin{array}{cl}
\mbox{$\frac{1}{\nu_e}$}f_e^+(\theta)&\mbox{if }z_e(\theta)>0,\\[0.6ex]
\max\{1,\mbox{$\frac{1}{\nu_e}$}f_e^+(\theta)\}&\mbox{otherwise.}
\end{array}\right.
\end{equation}
Hence $T'_e(\theta)\geq 0$ so that $T_e$ is non-decreasing and thus particles traversing $e$ 
respect FIFO without overtaking. 
Moreover, all the flow that enters
$e$ up to time $\theta$ exits by time $T_e(\theta)$. Indeed, since the queue is nonempty over the 
interval $W_e^\theta$, service at capacity 
implies $f_e^-(\xi+\tau_e)=\nu_e$ for almost all $\xi\in W_e^\theta$
and then
\begin{eqnarray}\nonumber
F_e^-(T_e(\theta))&=&\int_0^{\theta+\tau_e}\!\!\!\!\!\!f_e^-(\xi)\,d\xi+\!\!\int_{\theta+\tau_e}^{T_e(\theta)}\!\!\!\!\!\!\nu_e\, d\xi\\\nonumber
&=&F_e^-(\theta\!+\!\tau_e)+z_e(\theta)\\ \label{throughput}
&=&F_e^+(\theta).
\end{eqnarray}

\subsection{Dynamic shortest paths.}
A flow particle entering a path $P=(e_1,e_2,...,e_k)$ at time $\theta$ 
will reach the endpoint of the path at the time
\begin{align}\label{path_time}
    \l^P\!(\theta) & = T_{e_k}\circ\cdots\circ T_{e_1}(\theta),
\end{align}
Thus, denoting $\P_w$ the set of all $s$-$w$ paths in $G$,
the earliest time at which a particle starting from $s$ at time $\theta$ can reach $w$ is given by
\begin{align}  \label{eqL1P}
    \l_w(\theta) & = \min_{P \in \P_w} \l^P\!(\theta).
\end{align}

These functions correspond to shortest paths with edge costs that consider the queueing delays along the 
path at the appropriate times, taking into account the time it takes to reach every edge. We refer to them as 
{\em dynamic shortest paths}.  
Since the $T_e$'s are absolutely continuous and non-decreasing, the same holds for their 
compositions $\l^P$ and  therefore also for the $\l_w$'s (see \cite[Ch.3]{leo} or \S\ref{L1AC}). 
The monotonicity of $T_e$ together with the non-deficit constraints and the fact 
that the sum of latencies on any cycle is positive, imply that dynamic shortest paths 
do not contain cycles and therefore 
\eqref{eqL1P} can also be computed by solving
\begin{equation} \label{eqL} 
\l_w(\theta) = \left\{
\begin{array}{cl}
\theta & \mbox{ for $w=s$} \\  
\min\limits_{e=vw \in \delta^- (w)} T_e(\l_v(\theta))  & \mbox{ for $w \neq s$}.
\end{array}\right.
\end{equation}

The {\em $\theta$-shortest-path graph} is defined as the acyclic graph $G_\theta=(V,E'_\theta)$
containing all the shortest paths at time $\theta$.
An edge $e =vw$ is in $E'_\theta$ if and only if $T_e(\l_v(\theta))\leq\l_w(\theta)$, or equivalently  $T_e(\l_v(\theta))=\l_w(\theta)$,
 in which case it is said to be {\em active}.
Note that an inactive edge has $T_e(\l_v (\theta))>\l_w(\theta)$ so by continuity it remains inactive nearby.
We also denote $\Theta_e$  the set of all times $\theta$ at which $e$ is active. 
Note that $E'_\theta$ and $\Theta_e$  depend on the given flow-over-time $f$.

\subsection{Dynamic equilibrium} \label{subsection:eq_def}
A feasible $s$-$t$ flow-over-time can be interpreted as a dynamic equilibrium by looking at each infinitesimal 
inflow particle as a player that travels from the source to the sink along an $s$-$t$ path that yields the 
least possible travel time. The following definition makes this notion precise.

\begin{definition}[Dynamic equilibrium]\label{dyneq}
A feasible flow-over-time $f$ is a \textit{\em dynamic equilibrium} if for each $e=vw\in E$  
we have $ f^+_e (\xi)=0$  for almost all $\xi\in\l_v(\R\!\setminus\!\Theta_e)$.
\end{definition}

\noindent{\sc Remark.} In \cite{KochSk:2}, Koch and Skutella consider a slightly 
different notion, which we call {\em strong dynamic equilibrium}, requiring that
$e\not\in E'_\theta \Rightarrow f^+_e(\l_v(\theta))\!=\!0$ for each 
$e\!=\!vw\in E$ and almost all $\theta$. This condition implies dynamic 
equilibrium (since $\l_v$ is absolutely continuous and maps null sets into null sets),
and it is in fact strictly stronger as illustrated in the example below.
The point is that the composition 
$f^+_e(\l_v(\theta))$ does not allow to identify functions $f_e^+$ that coincide 
almost everywhere. Indeed, since $\l_v(\cdot)$ may be constant over a nontrivial 
interval, a simple modification of $f^+_e$ at a single point may spoil the {\em almost 
everywhere} condition with respect to $\theta$. 
Definition \ref{dyneq} avoids this difficulty. 

\vspace{1ex}
\noindent
{\sc Example.}
{\em Consider the simple network in Figure \ref{fig11} with inflow function
$$u(\theta)=\left\{\begin{array}{ll}
2&\mbox{ if }~ 0\leq \theta<1\\ 
0&\mbox{ if }~ 1\leq \theta\leq 2\\ 
1&\mbox{ if }~ 2<\theta.
\end{array}\right.$$

\begin{figure}[!htb]
\centering
\scalebox{1.2} 
{
\begin{pspicture}(0,-1)(6,1.3)
\pscircle[linewidth=0.022,dimen=outer](0.2,0.026855469){0.2}
\pscircle[linewidth=0.022,dimen=outer](6.3,0.026855469){0.2}
\pscircle[linewidth=0.022,dimen=outer](3.3,0.026855469){0.2}

\psline[linewidth=0.022cm,arrowsize=0.05291667cm
2.0,arrowlength=1.4,arrowinset=0.4]{->}(1.1,0.026855469)(3.1,0.026855469)

\psline[linewidth=0.022](0.4,0.22685547)(1.1,0.22685547)(1.1,-0.17314453)(0.4,-0.17314453)
\psline[linewidth=0.022](3.6,0.05)(4.1470447,0.5)(3.8915424,0.7817482)(3.3529556,0.33461943)
\psline[linewidth=0.022](3.6,-0.05)(4.1470447,-0.5)(3.8915424,-0.7817482)(3.3529556,-0.33461943)

\usefont{T1}{ptm}{m}{n}
\rput(0.75,0.45){\tiny $\nu_a\!\!=\!1$}
\usefont{T1}{ptm}{m}{n}
\rput(3.45,0.8){\tiny $\nu_b\!\!=\!\!1$}
\usefont{T1}{ptm}{m}{n}
\rput(3.45,-0.8){\tiny $\nu_c\!\!=\!\!1$}
\usefont{T1}{ptm}{m}{n}
\rput(4.85,-1.1){\tiny $\tau_c\!=\!2$}
\usefont{T1}{ptm}{m}{n}
\rput(2.1,0.23){\tiny $\tau_a\!=\!1$}
\usefont{T1}{ptm}{m}{n}
\rput(4.85,1.15){\tiny $\tau_b\!=\!1$}
\usefont{T1}{ptm}{m}{n}
\rput(6.31,0.04){\tiny $t$}
\usefont{T1}{ptm}{m}{n}
\rput(3.31,0.04){\tiny $r$}
\usefont{T1}{ptm}{m}{n}
\rput(0.2,0.04){\tiny $s$}
\psbezier[linewidth=0.022,arrowsize=0.05291667cm
2.5,arrowlength=2.0,arrowinset=0.4]{->}(4.02,-0.6131445)(4.66,-1.1531445)(5.34,-1.0731446)(6.26,-0.15314454)

\psbezier[linewidth=0.022,arrowsize=0.05291667cm
2.5,arrowlength=2.0,arrowinset=0.4]{->}(4.02,0.6468555)(4.66,1.1868554)(5.34,1.1068555)(6.26,0.18685547)
\end{pspicture}
}
\caption{\label{fig11}
Dynamic equilibrium in a simple network.}
\end{figure}
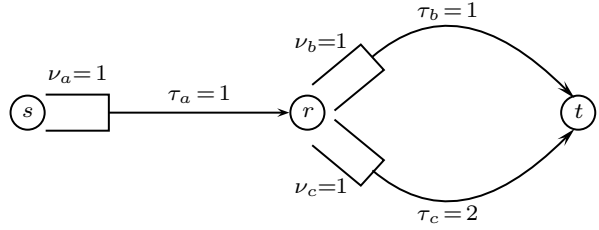

\noindent
The inflow of link $a$ is $f_a^+(\theta)=u(\theta)$ so that a queue builds up 
in the interval $[0,1]$ and is emptied during $[1,2]$ after which 
it stays empty with constant unit throughput. The outflow is $f_a^-(\theta)=\mathbbm{1}_{\{\theta\geq 1\}}$
so that links $b$ and $c$ can process all incoming flow without queueing. 
Since $\tau_c>\tau_b$ it follows that $c$ is never active and a dynamic equilibrium must 
send all the flow along the path $a$-$b$. A dynamic equilibrium is found by setting 
$f_c^+(\theta)\equiv f_c^-(\theta)\equiv 0$, $f_b^+(\theta)=\mathbbm{1}_{\{\theta\geq 1\}}$ and $f_b^-(\theta)=\mathbbm{1}_{\{\theta\geq 2\}}$,
with corresponding earliest-time functions $\l_s(\theta)=\theta$ and $\l_t(\theta)=\l_r(\theta)+1$ where
$$\l_r(\theta)=\left\{\begin{array}{cl}
1+\theta&\mbox{ for }~ \theta\in (-\infty,0]\cup[2,\infty)\\
1+2\theta&\mbox{ for }~ \theta\in[0,1]\\ 
3&\mbox{ for }~ \theta\in [1,2].
\end{array}\right.$$
This is in fact a strong dynamic equilibrium. However,
if we just modify $f_c^+$ by taking $f_c^+(3)>0$
we still have a dynamic equilibrium, but $f_c^+(\l_r(\theta))>0$ 
on the interval $[1,2]$ and strong equilibrium fails.
}

\vspace{2ex}
It is worth noting that at equilibrium all edges with positive queue must be active. 
Namely, let $E_\theta^*$ denote the set of links with positive queue
\begin{equation}
E^*_\theta=\{e=vw\in E: z_e(\l_v(\theta))>0\}.
\end{equation}
\begin{proposition}
\label{lemma:rightCont}
If $f$ is a dynamic equilibrium then $E^*_\theta\subseteq E'_\theta$ 
and we have
\begin{eqnarray}
E_\theta'&=&\{e=vw\in E:\l_w(\theta)\geq\l_v(\theta)+\tau_e\},\label{E'}\\
E_\theta^*&=&\{e=vw\in E:\l_w(\theta)>\l_v(\theta)+\tau_e\}.\label{E*}
\end{eqnarray}
\end{proposition}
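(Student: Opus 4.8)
The plan is to notice that the whole statement reduces to the single inclusion $E^*_\theta\subseteq E'_\theta$ — that at equilibrium an edge carrying a positive queue (at the virtual time $\l_v(\theta)$) is necessarily active — after which the rest is bookkeeping. Indeed, writing $a=\l_v(\theta)$ and using $T_e(a)=a+z_e(a)/\nu_e+\tau_e$ from \eqref{etf} together with the fact (from \eqref{eqL}) that $\l_w(\theta)\le T_e(a)$ always, with equality precisely when $e\in E'_\theta$: an active $e$ satisfies $\l_w(\theta)=a+z_e(a)/\nu_e+\tau_e$, which is $\ge a+\tau_e$ and is strict exactly when $z_e(a)>0$, giving $E'_\theta\subseteq\{\l_w(\theta)\ge\l_v(\theta)+\tau_e\}$ and $E^*_\theta\subseteq\{\l_w(\theta)>\l_v(\theta)+\tau_e\}$ (the latter using $E^*_\theta\subseteq E'_\theta$ to know $e$ is active); conversely $\l_w(\theta)\ge a+\tau_e$ forces activity (directly if $z_e(a)=0$, via $E^*_\theta\subseteq E'_\theta$ if $z_e(a)>0$), while $\l_w(\theta)>a+\tau_e$ combined with $\l_w(\theta)\le T_e(a)$ forces $z_e(a)>0$. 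So I would first clear away these elementary steps and concentrate on $E^*_\theta\subseteq E'_\theta$.

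For that, I would fix $e=vw$ with $z_e(a)>0$ and look at the last busy period ending at $a$: since $z_e$ is continuous and vanishes on $(-\infty,0]$ (non-deficit, as $F_e^+=0$ there), set $b=\sup\{\xi\le a:z_e(\xi)=0\}\in[0,a)$, so $z_e>0$ on $(b,a]$. On that interval the queue is non-empty, so \eqref{dam0} gives $z_e'=f_e^+-\nu_e$ a.e., whence the identity $T_e(a)=T_e(c)+\tfrac1{\nu_e}\int_c^af_e^+$ for all $c\in(b,a]$; in particular $\int_b^af_e^+=z_e(a)+\nu_e(a-b)>0$, so flow does enter $e$ during $(b,a]$. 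Let $c^*\in(b,a]$ be the least number with $f_e^+=0$ a.e.\ on $(c^*,a]$, so that $T_e(c^*)=T_e(a)$ while $f_e^+>0$ on a positive-measure subset of every interval $(c^*-\varepsilon,c^*)$ with $\varepsilon$ small. Now the equilibrium condition enters: by Definition~\ref{dyneq}, $f_e^+=0$ a.e.\ on $\l_v(\R\setminus\Theta_e)$, so almost every $\xi$ with $f_e^+(\xi)>0$ lies outside this set; since $\l_v$ is continuous, non-decreasing and $\to-\infty$ at $-\infty$, its image contains $(-\infty,a]$, so such a $\xi$ has preimages and all of them lie in $\Theta_e$. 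Hence I can pick $\xi_n\uparrow c^*$ with $\xi_n<a$ and $f_e^+(\xi_n)>0$, together with a preimage $\theta_n$, $\l_v(\theta_n)=\xi_n$, at which $e$ is active, i.e.\ $\l_w(\theta_n)=T_e(\xi_n)$.

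To close the argument, monotonicity of $\l_v$ gives $\theta_n<\theta$ (since $\l_v(\theta_n)<\l_v(\theta)$), hence $\l_w(\theta_n)\le\l_w(\theta)$ by monotonicity of $\l_w$; letting $n\to\infty$ and using continuity of $T_e$ yields $T_e(a)=T_e(c^*)=\lim_nT_e(\xi_n)=\lim_n\l_w(\theta_n)\le\l_w(\theta)$, which together with $\l_w(\theta)\le T_e(a)$ shows $e\in E'_\theta$. The crux — and the only place the equilibrium hypothesis is invoked directly — is this last passage: because Definition~\ref{dyneq} controls $f_e^+$ only through the image $\l_v(\R\setminus\Theta_e)$, and $\l_v$ may be locally constant so the condition cannot be phrased pointwise in $\theta$, one cannot in general locate active inflow into $e$ arbitrarily close to the time $a$ itself, and the point of introducing $c^*$ is to absorb that gap via $T_e(c^*)=T_e(a)$, valid exactly because the queue on $(c^*,a]$ only drains at rate $\nu_e$. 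The remaining fussing — that the range of $\l_v$ covers $(-\infty,a]$, that the null exceptional set of Definition~\ref{dyneq} can be avoided when choosing the $\xi_n$, and the degenerate case $c^*=a$ — is routine.
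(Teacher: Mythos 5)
Your argument is correct and follows the same route as the paper: everything reduces to the inclusion $E^*_\theta\subseteq E'_\theta$, which is obtained by noting that at equilibrium $T_e$ is constant on the $\l_v$-interval since the last time flow entered $e$ (the queue there is positive and $f_e^+=0$ a.e.), so that $T_e(\l_v(\theta))$ equals $\l_w(\theta')\le\l_w(\theta)$ for some earlier active $\theta'$, and the remaining set identities are the bookkeeping you describe in your first paragraph. The only packaging difference is that the paper takes $\theta'$ to be the largest $\theta'\le\theta$ with $e\in E'_{\theta'}$ and applies equilibrium directly on $(\l_v(\theta'),\l_v(\theta)]$, whereas you locate the last inflow point $c^*$ in the image of $\l_v$ and extract a sequence of active times converging to it; your version is somewhat longer but has the merit of spelling out why such earlier active times exist (which the paper takes for granted) and of making explicit how the image-based formulation of Definition~\ref{dyneq} is used when $\l_v$ is locally constant.
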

\begin{proof} 
Let $e\!\in\! E^*_\theta$ and consider the largest $\theta'\!\leq\!\theta$ at which $e$ was active. 
Equilibrium implies $f_e^+(\xi)\!=\!0$ for almost all 
$\xi\!\in\! (\l_v(\theta'),\l_v(\theta)]$, so the queue must be nonempty throughout this interval and 
\eqref{der} gives $T_e'(\xi)\!=\!0$ almost everywhere. Hence $T_e$ is constant in this interval so that
$$T_e(\l_v(\theta))=T_e(\l_v(\theta'))=\l_w(\theta')\leq \l_w(\theta)$$ which yields 
 $e\in E_\theta'$ proving the inclusion $E^*_\theta\subseteq E'_\theta$.

To show \eqref{E'} we note that for $e\in E'_\theta$ we have 
$$\l_w(\theta)=T_e(\l_v(\theta))\geq\l_v(\theta)+\tau_e$$ where the inequality follows  from definition of $T_e$
and the non-deficit constraints.
Conversely, suppose that $\l_w(\theta)\geq\l_v(\theta)+\tau_e$. If $z_e(\l_v(\theta))=0$
this yields $\l_w(\theta)\geq T_e(\l_v(\theta))$ so that $e\in E_\theta'$, while in the case $z_e(\l_v(\theta))>0$ 
the same conclusion follows since we already proved that $E^*_\theta\subseteq E'_\theta$. 
 
A similar argument proves \eqref{E*}. For $e\in E_\theta^*$ we have 
$z_e(\l_v(\theta))>0$ and $e\in E_\theta'$, so that $\l_w(\theta)=T_e(\l_v(\theta))>\l_v(\theta)+\tau_e$.
Conversely, if $\l_v(\theta)+\tau_e<\l_w(\theta)$ the inequality $\l_w(\theta)\leq T_e(\l_v(\theta))$ 
and the definition of $T_e$ yield  $z_e(\l_v(\theta))>0$.
\end{proof}

Intuitively, at equilibrium all the flow routed through an edge $e=vw$ up to time
$\l_v(\theta)$ should reach $w$ before the optimal time $\l_w(\theta)$. This is in 
fact an equivalent characterization of dynamic equilibrium.
\begin{theorem}\label{char}
Let $f$ be a feasible $s$-$t$ flow-over-time. The following are equivalent
\begin{itemize}
\item[(a)] $f$ is a dynamic equilibrium,
\item[(b)] for each $e=vw$ and all $\theta$ we have $F_e^+(\l_v(\theta))=F_e^-(\l_w(\theta))$,
\item[({}c)]  for each $e=vw$  and almost all $\theta$ we have $e\not\in E'_\theta\Rightarrow f_e^+(\l_v(\theta))\l_v'(\theta)=0$.
\end{itemize}
\end{theorem}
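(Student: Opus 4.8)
The plan is to establish $(a)\Leftrightarrow(c)$ directly and then close the loop via $(c)\Rightarrow(b)\Rightarrow(c)$; routing everything through $(c)$ is convenient because $(c)$ is the formulation suited to change-of-variables arguments. Fix an edge $e=vw$ and set $N:=\R\setminus\Theta_e=\{\theta:e\notin E'_\theta\}$. Since $\l_v$ and $T_e$ are continuous and $e\notin E'_\theta$ if and only if $T_e(\l_v(\theta))>\l_w(\theta)$, the set $N$ is open, hence a countable disjoint union of open intervals $N=\bigcup_k(a_k,b_k)$ whose finite endpoints lie in $\R\setminus N=\Theta_e$ (so $e$ is active there). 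Two elementary facts will be used repeatedly: the throughput identity \eqref{throughput}, $F_e^-(T_e(\xi))=F_e^+(\xi)$, which at an active instant $\theta\in\Theta_e$ reads $F_e^-(\l_w(\theta))=F_e^+(\l_v(\theta))$; and the chain $F_e^-(\l_w(\theta))\le F_e^-(T_e(\l_v(\theta)))=F_e^+(\l_v(\theta))$, valid for \emph{every} $\theta$ because $e\in\delta^-(w)$ forces $\l_w(\theta)\le T_e(\l_v(\theta))$ (by \eqref{eqL}) and $F_e^-$ is nondecreasing. In particular $(b)$ holds automatically for every $\theta\in\Theta_e$, so the whole question concerns the behaviour on $N$.

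For $(a)\Leftrightarrow(c)$ I would invoke the change-of-variables formula for nondecreasing absolutely continuous functions (see \S\ref{L1AC} and \cite[Ch.~3]{leo}): applied to $\l_v$ on each component $(a_k,b_k)$ and summed --- using that $\l_v$ is nondecreasing, so the images $\l_v((a_k,b_k))$ are pairwise essentially disjoint subintervals whose union is $\l_v(N)$ up to a countable set --- it yields
\[
\int_N f_e^+(\l_v(\theta))\,\l_v'(\theta)\,d\theta=\int_{\l_v(N)}f_e^+(\xi)\,d\xi .
\]
Since $f_e^+\ge0$ and $\l_v'\ge0$, the left-hand side vanishes precisely when $f_e^+(\l_v(\cdot))\,\l_v'(\cdot)=0$ a.e.\ on $N$, which is $(c)$, while the right-hand side vanishes precisely when $f_e^+=0$ a.e.\ on $\l_v(N)=\l_v(\R\setminus\Theta_e)$, which is $(a)$; hence $(a)\Leftrightarrow(c)$. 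It will also be useful to record that $(c)$ is equivalent to saying that the absolutely continuous, nondecreasing map $F_e^+\!\circ\l_v$ is constant on each component of $N$.

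To prove $(c)\Rightarrow(b)$, pick $\theta\in N$ lying in a component $(a_k,b_k)$. If $a_k$ is finite then $e$ is active at $a_k$, so $F_e^+(\l_v(a_k))=F_e^-(\l_w(a_k))$; constancy of $F_e^+\!\circ\l_v$ on $(a_k,b_k)$ gives $F_e^+(\l_v(\theta))=F_e^+(\l_v(a_k))$, and monotonicity then squeezes
\[
F_e^-(\l_w(a_k))\le F_e^-(\l_w(\theta))\le F_e^-(T_e(\l_v(\theta)))=F_e^+(\l_v(\theta))=F_e^-(\l_w(a_k)),
\]
forcing $F_e^+(\l_v(\theta))=F_e^-(\l_w(\theta))$. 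If $a_k=-\infty$, one first notes that $F_e^+(\l_v(\theta'))=0$ for all sufficiently negative $\theta'$ (all queues vanish before time $0$, so $\l_v(\theta')=\theta'+\mathrm{dist}_\tau(s,v)<0$ there, and $f_e^+$ vanishes on the negative axis); constancy gives $F_e^+(\l_v(\theta))=0$, and $0\le F_e^-(\l_w(\theta))\le F_e^+(\l_v(\theta))=0$ closes the case. Together with the fact that $(b)$ is automatic on $\Theta_e$ this proves $(b)$. (The possibility $N=\R$, i.e.\ $e$ never active --- as with edge $c$ in the example --- is the subcase $a_k=-\infty$, $b_k=+\infty$.)

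The remaining implication $(b)\Rightarrow(c)$ is the crux. Assume $(b)$ and fix a component $J=(a_k,b_k)$ of $N$; it suffices to show $F_e^+\!\circ\l_v$ is constant on $J$, and by $(b)$ it is enough to show $F_e^-$ is constant on the interval $\l_w(J)$. For each $\theta\in J$ the edge $e$ is inactive, so $\l_w(\theta)<T_e(\l_v(\theta))$, while $(b)$ and \eqref{throughput} give $F_e^-(\l_w(\theta))=F_e^+(\l_v(\theta))=F_e^-(T_e(\l_v(\theta)))$; being nondecreasing, $F_e^-$ is therefore constant --- with some value $c(\theta)$ --- on the nondegenerate interval $[\l_w(\theta),T_e(\l_v(\theta))]$. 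Since $\l_w$ and $T_e\!\circ\l_v$ are continuous and $\l_w(\theta)<T_e(\l_v(\theta))$ strictly, for $\theta'$ close to $\theta$ in $J$ these intervals overlap in a nondegenerate interval, forcing $c(\theta')=c(\theta)$; thus $c(\cdot)$ is locally constant on the connected set $J$, hence constant, and $F_e^-$ equals this constant on $\bigcup_{\theta\in J}[\l_w(\theta),T_e(\l_v(\theta))]\supseteq\l_w(J)$. This gives constancy of $F_e^+\!\circ\l_v$ on $J$, which is $(c)$. The main obstacle I anticipate is precisely this last chaining step --- propagating the agreement of the ``levels'' $c(\theta)$ across an entire, possibly unbounded, component of $N$ --- together with the routine but delicate bookkeeping in $(a)\Leftrightarrow(c)$, where one must track endpoints and null sets when passing between $\int_N(\cdot)$ and $\int_{\l_v(N)}(\cdot)$ since $\l_v$ need not be injective.
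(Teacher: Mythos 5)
Your proof is correct, but it takes a genuinely different route from the paper. The paper writes the inactive set $\Theta_e^c$ as an \emph{uncountable} union of half-open intervals $I_\theta=(\theta',\theta]$, where $\theta'$ is the last active time before $\theta$, and then uses the identity $F_e^+(\l_v(\theta))-F_e^-(\l_w(\theta))=\int_{\l_v(\theta')}^{\l_v(\theta)}f_e^+\,d\xi$ (a consequence of \eqref{throughput}) together with Lemma~\ref{sorgenfrey} — the paper's ad hoc measure-theoretic lemma about uncountable unions of intervals — to establish both $(a)\Leftrightarrow(b)$ and $(b)\Leftrightarrow(c)$ in a single stroke. You instead decompose $N=\R\setminus\Theta_e$ into its \emph{countably many} maximal open components, which sidesteps Lemma~\ref{sorgenfrey} entirely; you make $(c)$ the hub, proving $(a)\Leftrightarrow(c)$ by change of variables on each component, $(c)\Rightarrow(b)$ by constancy of $F_e^+\circ\l_v$ on a component and a squeeze from the active left endpoint (with a separate treatment of the $a_k=-\infty$ case), and $(b)\Rightarrow(c)$ by a local-constancy-on-a-connected-set argument propagating the common value $c(\theta)$ of $F_e^-$ on the overlapping intervals $[\l_w(\theta),T_e(\l_v(\theta))]$. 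The paper's approach is more uniform — one identity plus one lemma handles everything — but imports the slightly unusual Lemma~\ref{sorgenfrey}; yours trades that for three separate but elementary arguments built only on the standard structure of open subsets of $\R$, continuity, and connectedness, and the equivalence $(a)\Leftrightarrow(c)$ in particular comes out very cleanly as a single change-of-variables identity $\int_N f_e^+(\l_v)\l_v'\,d\theta=\int_{\l_v(N)}f_e^+\,d\xi$. Both rely essentially on \eqref{throughput} and the monotonicity of $\l_v$, $\l_w$, $T_e$.
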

\proof 
For each $\theta$ consider the interval $I_\theta=(\theta',\theta]$ with
 $\theta'\leq\theta$ the largest time such that $T_e(\l_v(\theta'))=\l_w(\theta)$
(it is well defined since $\l_w(\theta)\leq T_e(\l_v(\theta))$). 
We claim that $\Theta_e^c$ coincides with the union of the $I_\theta$'s.
Indeed,  for each $\theta\in \Theta^c_e$ we have $\theta'<\theta$ and therefore
 $\theta\in I_\theta$ so that $\Theta_e^c\subseteq\cup_\theta I_\theta$.
Conversely, for $\theta''\in I_\theta$ we have by definition of $\theta'$ that
$T_e(\l_v(\theta''))>\l_w(\theta)\geq \l_w(\theta'')$ 
so that $\theta''\in \Theta^c_e$ and then $\cup_\theta I_\theta\subseteq \Theta_e^c$. 

Now, invoking  \eqref{throughput}, for each $\theta$ we have
\begin{equation}\label{delta}
F_e^+(\l_v(\theta))-F_e^-(\l_w(\theta))=\int_{\l_v(\theta')}^{\l_v(\theta)}f_e^+(\xi)\,d\xi\geq 0,
\end{equation}
with equality iff $f_e^+$ vanishes almost everywhere on 
$(\l_v(\theta'),\l_v(\theta)]=\l_v(I_\theta)$. Lemma \ref{sorgenfrey} then shows that (b) holds 
iff $f_e^+(\xi)=0$ for almost all $\xi\in\cup_\theta \l_v(I_\theta)=\l_v(\Theta_e^c)$,
proving (b)$\Leftrightarrow$(a).
Similarly, a change of variables ({\em cf.} \S\ref{L1AC}) allows to rewrite  \eqref{delta} as
$$F_e^+(\l_v(\theta))-F_e^-(\l_w(\theta))=\int_{\theta'}^{\theta}\!f_e^+(\l_v(z))\l_v'(z)\,dz \geq 0,$$
with equality iff $f_e^+(\l_v(z))\l_v'(z)\!=\!0$ for almost all $z\in I_\theta$.
By Lemma \ref{sorgenfrey}, (b) holds iff this map vanishes almost everywhere
on $\cup_\theta I_\theta=\Theta_e^c$, proving (b)$\Leftrightarrow$({}c).
 \endproof

\begin{definition}[Cumulative flow]\label{flow}
The \textit{\em cumulative flow}
induced by a  dynamic equilibrium $f$ is defined as $x(\theta)\!=\!(x_e(\theta))_{e\in E}$  with
$x_e(\theta)\!=\!F_e^+(\l_v(\theta))\!=\!F_e^-(\l_w(\theta))$
for all $e=vw\in E$ and $\theta\in\R$.
\end{definition}
\noindent
Integrating the flow conservation constraints \eqref{eq:flow_cons} over
the interval $[0,\l_v(\theta)]$, it follows that for each $\theta\in\R$ the cumulative flow 
$x(\theta)$ is an $s$-$t$ flow of value $U(\theta)$,
\begin{align}\label{eq:flow_cons_int}
\sum_{e \in \delta^+(v)} x_e (\theta) -\!\!\!\!\sum_{e \in \delta^-(v)} x_e (\theta) 
=\left\{\begin{array}{cl}
U(\theta)&\quad\mbox{ for $v=s$}\\[1ex]
0&\quad\mbox{ for $v \in V \setminus \{ s,t \}$.}
\end{array}
\right.
\end{align}
Differentiating, for almost all $\theta$  we get that 
$x'(\theta)$ is an $s$-$t$ flow of value $u(\theta)$ with $x_e'(\theta)=0$ for $e\not\in E_\theta'$.

\subsection{Path formulation of dynamic equilibrium}
Since $G_\theta$ is acyclic, denoting  $\P$ the set of simple $s$-$t$ 
paths we may find a decomposition $u(\theta)=\sum_{P \in \P}h_P(\theta)$ into non-negative 
path-flows $h_P(\theta)\geq 0$ with 
$$x_e'(\theta)=\sum_{P\ni e}h_P(\theta).$$
Indeed, start with $y=x'(\theta)$ and consider the paths $P\in\P$ in a given 
order, setting $h_P(\theta)=\min_{e\in P}y_e$ and updating $y_e\leftarrow y_e-h_P(\theta)$
for $e\in P$. This yields a measurable decomposition $h_P\in \F$ such that $h_P(\theta)>0$ only for paths 
that belong to the $\theta$-shortest-path graph $G_\theta$.

It is appealing to take the latter as the definition of dynamic equilibrium. 
The difficulty is to properly define {\em shortest path} since this requires the exit-time 
functions $T_e$ which in turn require an appropriate flow-over-time $f$ to be associated with 
$h=(h_P)_{P\in\P}$. Since $f$ depends on how the flow $h$ propagates along the 
paths, both $f$ and $T_e$ must be determined simultaneously. This
{\em network loading} process typically requires additional conditions to be well defined,
such as an acyclic network structure or when link travel times are bounded away from zero
which is a natural and mild assumption
(see {\em e.g.} \cite{meu,XuF,Zhu}). Since we will not require network loading until 
\S \ref{section:Lp}, we defer its discussion to that section.

\section{Derivatives of dynamic equilibria: Normalized thin flows} \label{section:TFR}

The functions $x_e$ and $\l_w$ are absolutely continuous
and can be recovered by integrating their derivatives. In this section we characterize these 
derivatives, yielding a constructive method to find an equilibrium. Our characterization is 
closely related to the notion of {\em thin-flow with resetting} introduced by Koch and Skutella \cite{KochSk:2}. 

Recall that for almost all $\theta$ the derivative $x'(\theta)$ 
is an $s$-$t$ flow of size $u(\theta)$ with $x_e'(\theta)\!=\!0$ for $e\not\in E_\theta'$. 
On the other hand, clearly $\l_s'(\theta)=1$ while for $w\neq s$ we may combine \eqref{eqL}
and  \eqref{der} to get almost everywhere
$$\l_w'(\theta)=\min_{e=vw\in E_\theta'}T_e'(\l_v(\theta))\l_v'(\theta)
=\min_{e=vw\in E_\theta'}\rho_e(\l_v'(\theta),x_e'(\theta)),
$$
where for each $e=vw\in E_\theta'$ we set
$$\rho_e(\l_v',x_e')=\left\{\begin{array}{cl}
x_e'/\nu_e &\mbox{ if }e\in E_\theta^*\\
\max\{\l_v',x_e'/\nu_e\}&\mbox{ if }e\not\in E_\theta^*.
\end{array}\right.$$ 
Since $E_\theta'$ is acyclic, this allows to compute $\l_w'(\theta)$ by scanning the nodes $w$ in 
topological order. 

This motivates the next definition. Let $u_0\geq 0$ and $(E^*,E')$ a pair such that
$$\mbox{ $E^*\!\subseteq\! E'\!\subseteq\! E$ with $E'$ acyclic and for all $v\in V$ there is an $s$-$v$ path in $E'$.}\leqno (H)$$
We denote $K(E'\!,u_0)$ the nonempty, compact and convex 
set of all  static $s$-$t$ flows $x'=(x_e')_{e\in E}\geq 0$ of size $u_0$ with $x_e'=0$ for $e\not\in E'\!$. 
To each $x'\in K(E'\!,u_0)$ we associate the unique
labels given as above by $\l_s'=1$ and $\l_w'=\min_{e=vw\in E'}\rho_e(\l_v',x_e')$ for $w\neq s$.  Note that the
map $x'\mapsto\l'$ is continuous.

\begin{definition}[Normalized Thin Flow]
A flow $x'\!\in\! K(E'\!,u_0)$ is called a \textit{\em normalized thin flow (NTF) of value $u_0$ 
with resetting on $E^*\subseteq E'$} iff $x_e'=0$ for every edge $e=vw\in E'$ such that $\l_w'<\rho_e(\l_v',x_e')$.
\end{definition}

\begin{theorem}
\label{Th:K&S}
Let $f$ be a  dynamic equilibrium  and $\theta\in\R$ such that  
the right derivatives $u_0=\frac{dU}{d\theta^+}(\theta)$, $\l'_v=\frac{d\l_v}{d\theta^+}(\theta)$
and $x'_e=\frac{dx_e}{d\theta^+}(\theta)$ exist.
Then $x'$ is an NTF of value $u_0$ with resetting on $E^*_\theta\subseteq E_\theta'$,
with corresponding labels $\l'$.
\end{theorem}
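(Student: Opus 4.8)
\medskip\noindent\emph{Proof plan.}
The plan is to verify, one after the other, the three ingredients in the definition of a normalized thin flow with resetting: that $(E_\theta^*,E_\theta')$ satisfies $(H)$ and $x'\in K(E_\theta',u_0)$; that the right derivatives $\l'$ coincide with the labels the recursion $\l_s'=1$, $\l_w'=\min_{e=vw\in E_\theta'}\rho_e(\l_v',x_e')$ assigns to $x'$; and that $x_e'=0$ for every $e=vw\in E_\theta'$ with $\l_w'<\rho_e(\l_v',x_e')$. For the first point, $E_\theta'$ is acyclic and each node reachable from $s$ is the endpoint of an $s$-path contained in $E_\theta'$ (a shortest $s$-$v$ path at time $\theta$ uses only active edges), while $E_\theta^*\subseteq E_\theta'$ by Proposition~\ref{lemma:rightCont}, so $(H)$ holds. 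Differentiating the integrated conservation law \eqref{eq:flow_cons_int} from the right at $\theta$ --- a finite sum of functions that are right-differentiable at $\theta$ by hypothesis --- shows that $x'$ is a static $s$-$t$ flow of value $u_0$, and $x_e'\geq0$ since $x_e=F_e^+\circ\l_v$ is non-decreasing. Finally, if $e=vw\notin E_\theta'$ then by continuity $e$ stays inactive on some interval $[\theta,\theta+\varepsilon)$, so the equilibrium condition makes $f_e^+$ vanish a.e.\ on $\l_v([\theta,\theta+\varepsilon))$; since $\l_v$ is continuous this forces $x_e=F_e^+\circ\l_v$ to be constant there, hence $x_e'=0$ and $x'\in K(E_\theta',u_0)$.

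The heart of the proof is to compute the right derivative of $g_e:=T_e\circ\l_v$ at $\theta$, for $e=vw\in E_\theta'$, and show it equals $\rho_e(\l_v',x_e')$. Granting this, note that near $\theta$ the edges into $w$ that are inactive at $\theta$ stay inactive, so $\l_w=\min_{e=vw\in E_\theta'}g_e$ on a right-neighbourhood of $\theta$; since every edge of $E_\theta'$ entering $w$ is active at $\theta$, the elementary fact that the right derivative of a finite minimum of right-differentiable functions all attaining the minimum equals the minimum of the right derivatives yields $\l_w'=\min_{e=vw\in E_\theta'}g_e'(\theta^+)=\min_{e=vw\in E_\theta'}\rho_e(\l_v',x_e')$, which together with $\l_s'=1$ is precisely the recursion defining the labels of $x'$. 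When $e\in E_\theta^*$, i.e.\ $z_e(\l_v(\theta))>0$, the queue stays positive on a small interval, so $T_e'=f_e^+/\nu_e$ there by \eqref{der}, and integrating gives $g_e(\xi)-g_e(\theta)=(x_e(\xi)-x_e(\theta))/\nu_e$, whence $g_e'(\theta^+)=x_e'/\nu_e=\rho_e(\l_v',x_e')$.

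The delicate case, and the main obstacle, is $e\notin E_\theta^*$, i.e.\ $z_e(\l_v(\theta))=0$: then $z_e$ need not be differentiable, and the queue may stay empty or build up just after $\theta$, so one cannot simply integrate \eqref{der}. Instead I would return to the explicit formula \eqref{dam}; since $z_e(\l_v(\theta))=0$, the maximizing starting points $u<\l_v(\theta)$ may be discarded, and reparametrising $u=\l_v(\eta)$ with $\eta\in[\theta,\xi]$ (valid because $\l_v$ is continuous and non-decreasing) gives $z_e(\l_v(\xi))/\nu_e=\max_{\eta\in[\theta,\xi]}\psi(\eta)-\psi(\xi)$, where $\psi:=\l_v-x_e/\nu_e$, so that $g_e(\xi)=\tau_e+x_e(\xi)/\nu_e+\max_{\eta\in[\theta,\xi]}\psi(\eta)$. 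Subtracting the value at $\xi=\theta$, dividing by $\xi-\theta$, and invoking the elementary fact that $\frac{1}{\xi-\theta}\max_{\eta\in[\theta,\xi]}(r(\eta)-r(\theta))\to[r'(\theta^+)]_+$ for a continuous $r$ that is right-differentiable at $\theta$, applied to $r=\psi$ with $\psi'(\theta^+)=\l_v'-x_e'/\nu_e$, yields $g_e'(\theta^+)=x_e'/\nu_e+[\l_v'-x_e'/\nu_e]_+=\max\{x_e'/\nu_e,\l_v'\}=\rho_e(\l_v',x_e')$. This reparametrisation-and-limit step is where the real work lies; the rest is bookkeeping with right derivatives and with the finiteness of $E$.

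It remains to check the resetting condition. If $e=vw\in E_\theta'$ satisfies $\l_w'<\rho_e(\l_v',x_e')=g_e'(\theta^+)$, then since $g_e(\theta)=\l_w(\theta)$ the difference quotient of $g_e-\l_w$ tends to a strictly positive limit, so $g_e(\xi)>\l_w(\xi)$ for all $\xi$ in some $(\theta,\theta+\varepsilon)$; thus $e$ is inactive throughout that interval, the equilibrium condition once more annihilates $f_e^+$ on the corresponding image under $\l_v$, so $x_e$ is constant on $(\theta,\theta+\varepsilon)$ and $x_e'=0$. Assembling the three parts shows that $x'$ is an NTF of value $u_0$ with resetting on $E_\theta^*\subseteq E_\theta'$ and associated labels $\l'$.
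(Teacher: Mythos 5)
Your proof is correct, and the central step --- computing $g_e'(\theta^+)$ for $g_e=T_e\circ\l_v$ --- is a genuinely different argument from the paper's. The paper works with one-sided inequalities obtained by comparing $\l_w(\theta_n)-\l_w(\theta)$ and $\l_v(\theta_n)-\l_v(\theta)$ along suitably chosen sequences $\theta_n\downarrow\theta$, introducing two auxiliary edge sets $E^*_{{}^+}$ (links whose queue is, or becomes, positive just after $\theta$) and $E'_{{}^+}$ (empty-queue links active along some $\theta_n\downarrow\theta$), then assembling four partial inequalities (a)--(d) into $\l_w'=\min_{e=vw\in E_\theta'}\rho_e(\l_v',x_e')$; the matching lower bound at $w$ requires extracting, via finiteness of $\delta^-(w)$, a constant active edge along a subsequence. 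Your route instead throws away $u<\l_v(\theta)$ in the Skorokhod-type formula \eqref{dam} (valid because $z_e(\l_v(\theta))=0$), reparametrises by $u=\l_v(\eta)$, and rewrites $g_e(\xi)=\tau_e+x_e(\xi)/\nu_e+\max_{\eta\in[\theta,\xi]}\psi(\eta)$ with $\psi=\l_v-x_e/\nu_e$, after which a single clean lemma --- the right derivative of $\xi\mapsto\max_{[\theta,\xi]}(r-r(\theta))$ is $[r'(\theta^+)]_+$ --- produces $g_e'(\theta^+)=\rho_e(\l_v',x_e')$ in one stroke. This buys a uniform treatment of the reset / no-reset dichotomy (it is exactly whether the positive part kicks in), avoids the subsequence bookkeeping, and makes the final aggregation $\l_w'=\min_e g_e'(\theta^+)$ a trivial consequence of the right-derivative-of-a-minimum rule, since every $e\in E_\theta'$ into $w$ satisfies $g_e(\theta)=\l_w(\theta)$. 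The conservation/support verification and the resetting check at the end coincide with the paper's. One small thing worth stating explicitly when you write this up: the discard step in \eqref{dam} uses that $\int_u^{\l_v(\theta)}[f_e^+-\nu_e]\le 0$ for all $u\le\l_v(\theta)$, which is precisely $z_e(\l_v(\theta))=0$, and the substitution $u=\l_v(\eta)$ relies only on continuity and monotonicity of $\l_v$, not strict monotonicity.
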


\begin{proof} 
Differentiating \eqref{eq:flow_cons_int} it follows that 
$x'$ is an $s$-$t$ flow of value $u_0$.
Moreover, if $e\not\in E'_\theta$ then $e$ remains inactive on some interval $[\theta,\theta+\epsilon)$,
so the chain rule (see \S\ref{L1AC}) and equilibrium imply that on this interval
$x_e'(\xi)=f_e^+(\l_v(\xi))\l_v'(\xi)=0$  a.e., so $x_e(\cdot)$ is constant and $x_e'=0$. 
This proves that $x'\in K(E'_\theta,u_0)$. 

Let us show that $\l'$ are the corresponding labels. 
Clearly $\l'_s=1$.  For the rest of the argument we distinguish two more subsets of $E_\theta'$:
$E_{{}^+}^*$ contains the links which have a queue or are about to build one
with $z_e(\xi)>0$ on a small interval $(\theta,\theta+\epsilon)$, while $E_{{}^+}'$ includes the links
without queue at time $\theta$ but which are active along a strictly decreasing
sequence $\theta_n\downarrow\theta$. For $e\in E_{{}^+}'$ we have
$\l_w(\theta_n)=T_e(\l_v(\theta_n))\geq \l_v(\theta_n)+\tau_e$ and $\l_w(\theta)=\l_v(\theta)+\tau_e$
so that $\l_w(\theta_n)-\l_w(\theta)\geq\l_v(\theta_n)-\l_v(\theta)$ and
dividing by $\theta_n-\theta$ with $n\to\infty$ we get $\l_w'\geq\l_v'$.
Similarly, for $e\in E_\theta'\setminus E_{{}^+}^*$ we may take 
$\theta_n\downarrow\theta$ with $z_e(\l_v(\theta_n))=0$ so that $\l_w(\theta_n)\leq T_e(\l_v(\theta_n))=\l_v(\theta_n)+\tau_e$ 
and we get $\l_w'\leq\l_v'$.
Also, for $e=vw\in E'_\theta$ the capacity constraint gives for $\theta'\geq\theta$
$$x_e(\theta')- x_e(\theta)=\int_{\l_w (\theta)}^{\l_w (\theta')} \!\!\!\!\!f^-_e(\xi)\;d\xi\leq\nu_e(\l_w (\theta')-\l_w (\theta))$$
which implies $\l_w'\geq x'_e/\nu_e$, with  equality if $e\in E_{{}^+}^*$.
In summary
$$\begin{array}{cll}
\mbox{(a)}&\l_w'\geq\l_v'&\mbox{ for $e=vw\in E_{{}^+}'$}\\[0.5ex]
\mbox{(b)}&\l_w'\leq\l_v'&\mbox{ for $e=vw\in E_\theta'\setminus E_{{}^+}^*$}\\[0.5ex]
\mbox{({}c)}&\l_w'\geq x_e'/\nu_e&\mbox{ for $e=vw\in E_\theta'$}\\[0.5ex]
\mbox{(d)}&\l_w'=x_e'/\nu_e&\mbox{ for $e=vw\in E_{{}^+}^*$}.
\end{array}
$$
Combining (b) and (d) we get $\l_w'\leq\min_{e=vw\in E_\theta'}\rho_e(\l_v',x_e')$
with equality if there is some $e=vw\in E_\theta^*$. To prove the equality 
when no edge from $E_\theta^*$ is incident on $w$, choose any $\theta_n\downarrow\theta$ and 
a sequence of active edges $e_n\in E'_{\theta_n}$, and take a subsequence with $e_n=vw$ constant 
so that $e=vw\in E_{{}^+}'$. Then (a) and ({}c) combined give $\l_w'\geq\rho_e(\l_v',x_e')$. 
Altogether this proves $\l_w'=\min_{e=vw\in E_\theta'}\rho_e(\l_v',x_e')$ for $w\neq s$.

Let us finally show that $x'$ is an NTF. Suppose $x_e'>0$ on some $e=vw\in E_\theta'$ with $\l_w'<\rho_e(\l_v',x_e')$. The latter and (d) imply $e\not\in E_\theta^*$, while $x_e'>0$ gives $x_e(\theta')\!>\!x_e(\theta)$ for all $\theta'\!>\!\theta$ so $e$  
must be active on a sequence $\theta_n\!\downarrow\!\theta$ and $e\!\in\! E_{{}^+}'$.
Then (a) and ({}c) yield the contradiction $\l_w'\geq\rho_e(\l_v',x_e')$.
 \end{proof}

Theorem \ref{Th:K&S} derives the existence of NTF's  from a  dynamic equilibrium. 
To proceed in the other direction we study the existence of NTF's, and then 
by integration we reconstruct a dynamic equilibrium. 
\begin{theorem}
\label{teo:existTF}
Let $u_0\!\geq\! 0$ and $(E^*\!,E')$ satisfying $(H)$. Then there is an NTF of value $u_0$ with resetting  on $E^*\subseteq E'$.
\end{theorem}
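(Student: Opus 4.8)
The plan is to obtain the NTF as a fixed point of a suitable set-valued map on the compact convex set $K(E',u_0)$, using Kakutani's theorem. The difficulty is that the defining property of an NTF is not itself a variational inequality in an obvious way, because the labels $\l'$ depend on $x'$ through a nonlinear (min of $\rho_e$) recursion, and the resetting condition ``$x_e'=0$ whenever $\l_w'<\rho_e(\l_v',x_e')$'' mixes the two. So I would first reformulate the NTF condition in a way amenable to a fixed-point argument.

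\textbf{Reformulation.} Given $x'\in K(E',u_0)$ with induced labels $\l'$, define node potentials and observe that $x'$ is an NTF if and only if, for every edge $e=vw\in E'$, one has $x_e'>0 \Rightarrow \l_w'\geq\rho_e(\l_v',x_e')$; combined with the always-true inequality $\l_w'\leq\rho_e(\l_v',x_e')$ coming from the definition of $\l_w'$ as a minimum, this says flow travels only on edges that are ``tight'' for the potential recursion. This is precisely a complementarity/shortest-path type condition: $x'$ routes $u_0$ units of $s$-$t$ flow using only edges $e=vw$ with $\l_w'=\rho_e(\l_v',x_e')$. I would make this into a variational inequality by noting that for fixed labels $\l'$ the right object to minimize is something like $\sum_e \Psi_e(x_e')$ where $\Psi_e$ is a convex function whose derivative encodes $\rho_e$ — e.g.\ for $e\notin E^*$, $\Psi_e'(t)=\max\{\l_v',t/\nu_e\}$, and for $e\in E^*$, $\Psi_e'(t)=t/\nu_e$ — but since $\l'$ itself depends on $x'$ this is genuinely a fixed-point problem, not a single convex program.

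\textbf{Fixed-point construction.} The cleanest route: define $\Phi:K(E',u_0)\rightrightarrows K(E',u_0)$ by first computing the labels $\l'=\l'(x')$ (continuous in $x'$ by the remark in the text), and then letting $\Phi(x')$ be the set of minimizers over $K(E',u_0)$ of the convex function $y\mapsto\sum_{e=vw\in E'}\phi_e(y_e;\l_v')$, where $\phi_e(\cdot;\l_v')$ is a strictly convex potential with $\partial_t\phi_e(t;\l_v')=\rho_e^{\mathrm{marg}}(\l_v',t)$ chosen so that the first-order optimality conditions of this program coincide with the resetting condition relative to the \emph{given} labels $\l_v'$. Concretely one wants $\partial_t\phi_e(t;\l_v')=t/\nu_e$ on $E^*$ and $\partial_t\phi_e(t;\l_v')=\max\{\l_v',\,t/\nu_e\}$ off $E^*$ (possibly with a small strictly-convex perturbation $\epsilon t^2/2$ to guarantee uniqueness of the minimizer and hence single-valuedness, then pass $\epsilon\to 0$). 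Since $x'\mapsto\l'(x')$ is continuous and the minimizer correspondence of a convex program with continuously varying data has closed graph and nonempty convex values, Kakutani (or Brouwer after the $\epsilon$-regularization) gives a fixed point $\bar x'$. Then one checks that at a fixed point the labels induced by $\bar x'$ agree with $\l'(\bar x')$ used to define the program — this is where one must verify that the optimality conditions of the program really reproduce the label recursion $\l_w'=\min_{e=vw}\rho_e(\l_v',x_e')$ and not merely a one-sided inequality — and the complementarity conditions of the program are exactly the NTF resetting conditions.

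\textbf{Main obstacle.} The crux is the consistency at the fixed point: I need the KKT conditions of the potential minimization (with labels frozen at $\l'(x')$) to force, at a fixed point, that $\l'(\bar x')$ equals the minimum-recursion labels of $\bar x'$ with equality on edges carrying flow. The subtlety is the case analysis around $E^*$ versus $E'\setminus E^*$ and the non-smooth kink of $\rho_e$ at $\l_v'=x_e'/\nu_e$; I expect to need an argument, parallel to steps (a)--(d) in the proof of Theorem~\ref{Th:K&S}, showing that node potentials propagated along the support of $\bar x'$ are consistent. A secondary technical point is ensuring $K(E',u_0)$ is genuinely nonempty and the correspondence maps into it — this is guaranteed by hypothesis $(H)$ (existence of $s$-$v$ paths in $E'$ and acyclicity), which also lets me process nodes in topological order so that $\l'(x')$ is well-defined and the whole scheme is finite-dimensional. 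Once the fixed point is shown to be an NTF, the proof is complete; I would close by remarking that this argument is nonconstructive in general but, combined with the acyclic structure, places the problem in PPAD as claimed in the introduction.
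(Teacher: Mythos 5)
Your high-level plan (Kakutani on the compact convex set $K(E',u_0)$, using continuity of $x'\mapsto\l'$) is the same as the paper's, but your construction of the set-valued map is much more involved than necessary, and the step you flag as ``the crux'' is a genuine unresolved gap, not just a technical loose end.

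The paper's map is simply
$$\Gamma(x')=\{y'\in K: y'_e=0 \text{ for all }e=vw\in E'\text{ with }\l'_w<\rho_e(\l_v',x_e')\},$$
with $\l'$ the labels of $x'$. No convex potential, no KKT, no $\epsilon$-regularization. With this definition, $x'\in\Gamma(x')$ is \emph{verbatim} the NTF condition, so the ``consistency at the fixed point'' issue you worry about never arises. One then checks: $\Gamma(x')$ is convex and compact (it is cut out by linear equations); it is nonempty because for every $w$ some incoming edge attains the minimum in $\l'_w=\min_{e=vw\in E'}\rho_e(\l'_v,x'_e)$, so tight edges contain an $s$-$t$ path along which $u_0$ units can be routed; and $\Gamma$ is upper semi-continuous because $x'\mapsto\l'$ is continuous and the ``bad'' set $\{e:\l'_w<\rho_e(\l'_v,x'_e)\}$ can only shrink in the limit (strict inequalities are open). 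Kakutani then finishes.

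The problem with your $\Phi$ built from minimizing $\sum_e\phi_e(y_e;\l'_v)$ over $K$ is structural, not merely technical. The KKT conditions of a min-cost flow with marginal costs $\partial_t\phi_e(t;\l'_v)$ produce \emph{additive} node potentials $\mu$: on flow-carrying edges $e=vw$ one gets $\mu_w-\mu_v=\partial_t\phi_e(y_e;\l'_v)=\rho_e(\l'_v,y_e)$, and on zero-flow edges $\mu_w-\mu_v\leq\rho_e(\l'_v,0)$. But the NTF label recursion is \emph{non-additive}: $\l'_w=\min_{e=vw}\rho_e(\l'_v,x'_e)$, with no ``$+\,\l'_v$'' on the right. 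So even at a fixed point of $\Phi$, the optimal multipliers $\mu$ are shortest-path distances for additive edge costs $\rho_e(\l'_v,y_e)$, which is not the same object as the frozen labels $\l'(\bar x')$, and there is no reason for the complementarity set of $\mu$ to coincide with the set of edges that are tight for the label recursion. You acknowledge this as ``the main obstacle'' and gesture at steps (a)--(d) of the proof of Theorem~\ref{Th:K&S}, but those steps compare a dynamic equilibrium with its thin flow and do not reconcile additive KKT multipliers with the multiplicative/min-type label recursion. As written, the proposal does not establish that a fixed point of $\Phi$ is an NTF, which is the whole theorem. The moral is that there is no need to encode the NTF condition as first-order conditions of an auxiliary convex program: the condition is already stated in a form directly amenable to a Kakutani argument.
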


\begin{proof} Let $K=K(E'\!,u_0)$ and observe that the NTF's are precisely the fixed-points of the set-valued map $\Gamma:K\to 2^K$ with nonempty convex compact values given by
$$\Gamma(x')=\{y'\in K: y'_e=0 \mbox{ for all $e\in E'$ such that $\l'_w<\rho_e(\l_v',x_e')$}\}$$
with $\l'$ the labels corresponding to $x'$ and $E^*$. Since $x'\mapsto\l'$ is continuous it follows 
that $\Gamma$ is upper-semi-continuous and the existence
of a fixed point $x'\in\Gamma(x')$ is guaranteed by Kakutani's Fixed Point Theorem.
\end{proof}

This result shows that finding an NTF belongs to the complexity class PPAD.
It also suggests a finite (exponential time) algorithm to compute an NTF: we guess the
set $E'_0$ of links $e\in E'$ that satisfy $\l'_w=\rho_e(\l_v',x_e')$, and then solve
$$\max_{(x',\l')}\mbox{$\left\{\sum_{w\in V}\l_w':x'\in K(E'_0,u_0);\l_s'=1;\l_w'\leq\min_{e=vw\in E'}\rho_e(\l_v',x'_e)\right\}$}.$$
The latter can be restated as a mixed integer linear program and solved in finite time. 
By considering all possible subsets $E'_0\subseteq E'$ the method eventually finds an NTF.
 
In general there may exist different NTF's, each one with its corresponding labels. We
show next that the labels in all of them coincide.
\begin{theorem}
\label{teo:uniTF}
Let $u_0\!\geq\! 0$ and $(E^*\!,E')$ satisfying $(H)$. Then the labels $\l'$ are the same for all NTF's 
of value $u_0$ with resetting on $E^*\subseteq E'$.
\end{theorem}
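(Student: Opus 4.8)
The plan is to prove uniqueness of the labels by a comparison argument between two NTF's. Suppose $x'$ and $\bar x'$ are two normalized thin flows of value $u_0$ with resetting on $E^*\subseteq E'$, with associated labels $\l'$ and $\bar\l'$. Since the node labels are computed by scanning $V$ in a topological order of the acyclic graph $E'$, I would argue by induction on this order that $\l'_w=\bar\l'_w$ for every $w$. The base case $\l'_s=\bar\l'_s=1$ is immediate. For the inductive step, consider the first node $w$ (in topological order) at which the labels differ, say $\l'_w<\bar\l'_w$ without loss of generality; by the inductive hypothesis all predecessors $v$ of $w$ along edges of $E'$ satisfy $\l'_v=\bar\l'_v$.

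The key is to exploit the defining property of an NTF. Since $\l'_w=\min_{e=vw\in E'}\rho_e(\l'_v,x'_e)$, pick an edge $e=vw\in E'$ attaining this minimum, so $\l'_w=\rho_e(\l'_v,x'_e)$; in particular this forces $x'_e>0$ unless $e\notin E^*$ and $\rho_e(\l'_v,x'_e)=\l'_v$. On the other hand, for the flow $\bar x'$ we have $\bar\l'_w\le\rho_e(\bar\l'_v,\bar x'_e)=\rho_e(\l'_v,\bar x'_e)$, and since $\bar\l'_w>\l'_w=\rho_e(\l'_v,x'_e)$ and $\rho_e$ is nondecreasing in its second argument, this should yield $\bar x'_e>x'_e$, i.e. the flow strictly increases on every edge entering $w$ that is "tight" for $x'$. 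Conversely, I would look at an edge $\bar e=\bar v w\in E'$ tight for $\bar x'$ (attaining the min defining $\bar\l'_w$); the NTF resetting condition for $x'$ then says that since $\l'_w<\rho_{\bar e}(\l'_{\bar v},x'_{\bar e})=\rho_{\bar e}(\bar\l'_{\bar v},x'_{\bar e})\le\bar\l'_w=\rho_{\bar e}(\bar\l'_{\bar v},\bar x'_{\bar e})$, we must have $x'_{\bar e}=0<\bar x'_{\bar e}$, so again the flow strictly increases. More generally, for every edge $e=vw\in E'$ one should get $\bar x'_e\ge x'_e$ whenever the edge is "relevant", and strict somewhere.

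To derive a contradiction I would propagate this monotonicity through the network. Define $S$ to be the set of nodes $w$ with $\l'_w<\bar\l'_w$ together with $s$; by the argument above, every edge of $E'$ entering a node of $S\setminus\{s\}$ and relevant to the labels carries strictly more flow under $\bar x'$ than under $x'$, and no relevant edge leaving $S$ can carry less. Summing the flow-conservation identities over the nodes of $S$ (both $x'$ and $\bar x'$ are $s$-$t$ flows of the same value $u_0$, and $t\notin S$ since $\l'_t<\bar\l'_t$ cannot hold — or handling $t$ separately) gives that the net flow into $S$ is the same for both, yet the edge-by-edge comparison forces it to be strictly larger for $\bar x'$, a contradiction. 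The main obstacle I anticipate is the bookkeeping around the $\rho_e$ function's two cases ($e\in E^*$ versus $e\notin E^*$) and the fact that $\rho_e$ is only nondecreasing, not strictly increasing, in $x'_e$ when $e\notin E^*$ and the max is attained by the $\l'_v$ term — there one must use that $\l'_v=\bar\l'_v$ by induction to still extract the needed strict inequality, and carefully distinguish edges where the label constraint is tight from slack ones so that the conservation-law summation telescopes correctly.
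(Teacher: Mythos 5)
Your overall strategy — comparing two NTF's by summing flow conservation over the set $S$ of nodes where the labels differ, and exploiting the monotonicity of $\rho_e$ on the edges crossing $\partial S$ — is exactly the paper's. There are, however, two issues with how you set it up.

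\textbf{Including $s$ in $S$ breaks the boundary inequalities.} You define $S$ as $\{w : \l'_w < \bar\l'_w\}\cup\{s\}$. Consider an edge $e=sw\in\delta^+(S)$ with $w\notin S$ and $e\notin E^*$. Here $\l'_s=\bar\l'_s=1$ and $\l'_w\ge\bar\l'_w$, but you cannot conclude $\bar x'_e\le x'_e$: if $\bar x'_e>x'_e$ with both $x'_e/\nu_e$ and $\bar x'_e/\nu_e$ below $1$, then $\rho_e(1,\bar x'_e)=\rho_e(1,x'_e)=1$, so the NTF and label conditions give no contradiction. The strictness you rely on when bounding $\rho_e$ at a boundary edge requires a \emph{strict} inequality in one of the two arguments, and adding $s$ destroys this on the first argument. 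The paper's choice $S=\{v:\l'_v>h'_v\}$ automatically excludes $s$ and avoids this entirely. Simply dropping $s$ from your $S$ fixes it: then for $e=vw\in\delta^+(S)$ one has $\l'_v<\bar\l'_v$ strictly, and if $\bar x'_e>x'_e$ the NTF property gives $\bar\l'_w=\rho_e(\bar\l'_v,\bar x'_e)>\rho_e(\l'_v,x'_e)\ge\l'_w\ge\bar\l'_w$, a genuine contradiction.

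\textbf{The claimed strict inequality on the aggregate is not immediate.} You write that the edge-by-edge comparison ``forces it to be strictly larger for $\bar x'$,'' but what the boundary inequalities plus flow conservation actually yield is \emph{equality} $\bar x'_e = x'_e$ on every edge of $\delta(S)$ — the paper records this explicitly before proceeding. The contradiction must come from somewhere else. Your idea of looking at the topologically first node $w$ where the labels differ does supply it: the tight edge $e=vw$ for $x'$ has $v$ with $\l'_v=\bar\l'_v$ (hence $v\notin S$, so $e\in\delta^-(S)$), and $\rho_e(\l'_v,x'_e)=\l'_w<\bar\l'_w\le\rho_e(\l'_v,\bar x'_e)$ forces $\bar x'_e>x'_e$ on that single boundary edge, contradicting the equality just derived. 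That is a correct (and arguably slightly leaner) route than the paper's, which instead shows $y'_e=0$ on $\delta^-(S)$, picks a node of $S$ whose in-edges in $E'$ all cross the boundary, and compares the label formulas there. Both use acyclicity once to pick a ``first'' node in $S$; the paper then plays with the $\rho_e$ values at that node, while you extract a single strict inequality. As written, though, your claim that ``every edge of $E'$ entering a node of $S\setminus\{s\}$ and relevant to the labels carries strictly more flow'' is both vague (``relevant'') and false in general — the strictness is only guaranteed on the one tight edge at the topologically first differing node, and the other boundary edges give only weak inequalities, which is exactly why the flow-conservation step yields equality and the extra local argument is indispensable.
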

\begin{proof}
Let $x'$ and $y'$ be two NTF's with different labels $\l'\neq h'$, and 
suppose without loss of generality that  $S=\{v \in V : \l'_v > h'_v \}$ is nonempty. Consider the net flow across 
the boundary of $S$: since $x'$ and $y'$ satisfy flow conservation, setting $b_s=u_0$, $b_t=-u_0$ 
and $b_v=0$ for $v\in V\setminus\{s,t\}$, we get 
\begin{equation}\label{bdry}
x'(\delta^+(S))-x'(\delta^-(S))=\mbox{$\sum_{v\in S}$}b_v=y'(\delta^+(S))-y'(\delta^-(S)).
\end{equation}
For $e=vw \in \delta^+(S)$ we have $x'_e \leq y'_e$ since otherwise $x_e'>y_e'$ implies $x_e'>0$ and 
$\l_w'=\rho_e(\l_v',x_e')>\rho_e(h_v',y_e')\geq h_w'$ contradicting $w\not\in S$.
Similarly, $x'_e \geq y'_e$ for all $e=vw \in \delta^-(S)$  since $y_e'>x_e'$ implies $y_e'>0$ and  $h_w'=\rho_e(h_v',y_e')\geq \rho_e(\l_v',x_e')\geq\l_w'$ contradicting $w\in S$.
These inequalities and \eqref{bdry} imply $x_e'=y_e'$ for all $e\in \delta(S)$, with
$y'_e=0$ for $e\in\delta^-(S)$ since $y_e'>0$ yields a contradiction as before. 
Since $E'$ is acyclic, we may find $w\in S$ with all edges $e=vw\in E'$ belonging to $\delta^-(S)$.
Now, $\l_w'>h_w'\geq 0$ and $x_e'=0$ implies that $e \notin E^*$ for 
all these edges, and then $\rho_e(\l_v',x_e')=\l_v'$ as well as $\rho_e(h_v',y_e')=h_v'$, from which we
get the contradiction $h'_w=\min_{vw \in E' } h'_v\geq\min_{vw \in E' }  \l'_v=\l'_w$. 
\end{proof}

\section{Existence and uniqueness of dynamic equilibria}\label{section:EU}

Koch and Skutella \cite{KochSk:2} describe a method to extend an equilibrium for the case of a constant inflow rate
$u(\theta)\equiv u_0$. Given a feasible flow-over-time $f$ which satisfies the equilibrium conditions in $[0,\theta_k]$, the equilibrium is extended as follows:\\[1ex]
(1) Find $x'$ an NTF of value $u_0$ with resetting on $E^*_{\theta_k}\!\subseteq\! E'_{\theta_k}$, and
let $\l'$ denote the corresponding labels.\\[1ex]
(2) Compute $\theta_{k+1}=\theta_k+\alpha$ with  $\alpha > 0$ the largest value with
\begin{align}
 \l_w(\theta_k)+\alpha \l'_w-\l_v(\theta_k)-\alpha \l'_v  & \leq  \tau_e & &  \mbox{ for all } e=vw \not\in E'_{\theta_k} \quad\quad\quad\label{c1} \\
\l_w(\theta_k)+\alpha \l'_w-\l_v(\theta_k)-\alpha \l'_v  & \geq \tau_e & & \mbox{ for all } e=vw \in E^*_{\theta_k}\label{c2}
\end{align}
(3) Extend the earliest-time functions and the flow-over-time as\label{alg:pasoAct}
$$\begin{array}{ll}
\quad\l_v (\theta) = \l_v(\theta_k)+(\theta-\theta_k)\l'_v  & \text{ for } v \in V \text{ and }\theta \in [\theta_k,\theta_{k+1}]  \\[1ex]
\quad f_e^+(\xi) = x'_e/\l'_v &  \text{ for }  e=vw \in E \text{ and } \xi\in[\l_v(\theta_k),\l_v(\theta_{k+1}))\\[1ex]
\quad f_e^-(\xi) = x'_e/\l'_w &  \text{ for }  e=vw \in E \text{ and } \xi\in[\l_w(\theta_k),\l_w(\theta_{k+1}))\\[1ex]
\end{array}
$$
Theorems \ref{teo:existTF}  and \ref{teo:uniTF}  imply that $x'$ in step (1) exists and $\l'$ is unique. 
Moreover there are finitely many $\l'$, each one corresponding to a different pair $(E^*_\theta,E'_\theta)$.
The $\alpha$ computed in (2) is strictly positive so that each iteration extends the earliest-time functions
to a strictly larger interval. The conditions \eqref{c1} and \eqref{c2} correspond respectively to the maximum ranges on which 
the inactive edges remain inactive, and the positive queues remain positive. Hence,
for $\theta\in[\theta_k,\theta_{k+1})$ the pair $(E_\theta^*\!,E'_\theta)$ remains constant while 
at $\theta_{k+1}$ this pair changes and we must recompute the NTF. Note that when $\l_v'=0$
the update of $f_e^+$ does not extend its domain of definition, and similarly for
$f_e^-$ when $\l_w'=0$. As shown in \cite{KochSk:2} the extension maintains at all times 
the conditions for dynamic equilibrium in the strong sense (see Remark after Definition \ref{dyneq}).

This extension procedure can be used to establish the existence of a dynamic equilibrium.
Starting from the interval $(-\infty,\theta_0]$ with $\theta_0=0$ and zero flows, the extension can be iterated as long as required to find a new 
interval $[\theta_k,\theta_{k+1}]$ with $\theta_{k+1}>\theta_k$ at every step $k$. Eventually, $\theta_k$ may 
have a finite limit $\theta_\infty$: in this case, since the label functions are non-decreasing and have bounded derivatives, 
we can define the equilibrium at $\theta_\infty$ as the limit point of the label functions $\l$, and restart the extension 
process. A standard argument using Zorn's Lemma shows that a maximal solution is defined over all $\R_+$.  
Note that the $f$ constructed above is right-constant. 
\begin{definition} \label{rcl} A function $g:\R\to\R$ is called {\em right-constant} if for each $\theta\in\R$ there 
is an $\epsilon>0$ such that $g$ is constant on $[\theta,\theta+\epsilon)$. Similarly, $g$ is
{\em right-linear} if  for each $\theta$  it is affine on a small interval $[\theta,\theta+\epsilon)$.
\end{definition}
The extension method works even if the inflow rate function is piecewise constant, so we have the 
following existence result.

\begin{theorem}\label{Teo-existence}
Suppose that the inflow $u$ is piecewise constant, i.e,  there is an increasing sequence $(\xi_k)_{k \in \N}$ with $\xi_0=0$
such that $u(\cdot)$ is constant on each interval  $[\xi_k,\xi_{k+1})$. Then, there exists a  strong
dynamic equilibrium $f$ which is right-constant and whose label functions $\l$ are right-linear.
\end{theorem}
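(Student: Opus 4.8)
The plan is to turn the informal extension procedure (1)--(3) sketched above, in the spirit of Koch and Skutella \cite{KochSk:2}, into a rigorous induction whose two new ingredients are the existence (Theorem~\ref{teo:existTF}) and label-uniqueness (Theorem~\ref{teo:uniTF}) of normalized thin flows. First I would fix one constant piece of the inflow, say $u\equiv u_0$ on $[\xi_j,\xi_{j+1})$, and set up the induction hypothesis: we are given a feasible flow-over-time $f$, together with its earliest-time functions $\l$, satisfying the strong equilibrium conditions on $[0,\theta_k]$ for some $\theta_k\in[\xi_j,\xi_{j+1})$, with $f$ right-constant and $\l$ right-linear. One step of the construction then reads: invoke Theorem~\ref{teo:existTF} to obtain an NTF $x'$ of value $u_0$ with resetting on $E^*_{\theta_k}\subseteq E'_{\theta_k}$, let $\l'$ be its labels (unique by Theorem~\ref{teo:uniTF}), set $\theta_{k+1}=\min\{\xi_{j+1},\,\theta_k+\alpha\}$ with $\alpha$ as in step~(2), and extend $\l$ and $f$ by the explicit formulas in step~(3).

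The verification that one such step is legitimate splits into four checks. First, $\alpha>0$: every inactive edge $e\notin E'_{\theta_k}$ has $\l_w(\theta_k)-\l_v(\theta_k)<\tau_e$ strictly and, by \eqref{E*}, every $e\in E^*_{\theta_k}$ has $\l_w(\theta_k)-\l_v(\theta_k)>\tau_e$ strictly, so since $E$ is finite a small $\alpha$ preserves \eqref{c1}--\eqref{c2}. Second, on $[\theta_k,\theta_{k+1})$ the pair $(E^*_\theta,E'_\theta)$ stays equal to $(E^*_{\theta_k},E'_{\theta_k})$: this is precisely what \eqref{c1} and \eqref{c2} encode, namely that no inactive edge becomes active and no positive queue empties before $\theta_{k+1}$, using $z_e(\l_v(\theta))=\nu_e(\l_w(\theta)-\l_v(\theta)-\tau_e)$ from \eqref{etf} along the linear extension. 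Third, the extended $f$ is feasible: capacity follows from $f_e^-=x_e'/\l_w'\le\nu_e$ (because the labels satisfy $\l_w'\ge x_e'/\nu_e$), flow conservation from $x'$ being a static $s$-$t$ flow of value $u_0$, and non-deficit and queue-at-capacity from the formulas for $z_e$ under the right-linear $\l$. Fourth, the extended $f$ is a \emph{strong} dynamic equilibrium on $[0,\theta_{k+1}]$: edges outside $E'_{\theta_k}$ receive zero inflow on the new interval by construction, and the NTF property gives $x_e'=0$ on every $e\in E'_{\theta_k}$ with $\l_w'<\rho_e(\l_v',x_e')$, which is exactly the required vanishing of $f_e^+\circ\l_v$; moreover $f$ remains right-constant and $\l$ right-linear. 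This check is essentially carried out in \cite{KochSk:2}, and I would reproduce the short argument.

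Next I would iterate. Within $[\xi_j,\xi_{j+1})$ either finitely many steps reach $\theta_k=\xi_{j+1}$, at which point we pass to the next constant piece with the new value of $u_0$, or the breakpoints form an increasing sequence $\theta_k\uparrow\theta_\infty\le\xi_{j+1}$. In the latter case, each $\l'_v$ is drawn from the finite set of NTF-label vectors associated with the finitely many pairs $(E^*_\theta,E'_\theta)$, so the slopes of the $\l_v$ are uniformly bounded; hence each $\l_v$ is Lipschitz and non-decreasing and extends continuously to $\theta_\infty$, and we define $\l_v(\theta_\infty)$ as that limit. The flow $f$ is thereby defined a.e.\ on $[0,\sup_v\l_v(\theta_\infty))$, and passing to the limit in the integral characterization (b) of Theorem~\ref{char}, using continuity of the $F_e^\pm$ and $\l_v$, shows $f$ is a strong dynamic equilibrium on $[0,\theta_\infty]$, still right-constant with right-linear labels; so the extension can be restarted at $\theta_\infty$. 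Finally I would order the set of all pairs $([0,T],f)$ with $f$ a right-constant strong dynamic equilibrium on $[0,T]$ having right-linear labels, by extension, note that chains admit upper bounds (unions, with limit labels at the supremum as just described), and apply Zorn's Lemma to get a maximal element $([0,T^*],f^*)$; if $T^*<\infty$ then either the single-step extension or the accumulation-point argument produces a strictly larger interval, a contradiction, so $T^*=+\infty$ and $f^*$ (extended by zero on $(-\infty,0]$) is the desired equilibrium on all of $\R_+$.

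The hard part will be the accumulation-point analysis: showing that a possibly infinite cascade of NTF recomputations inside a single constant piece of $u$ still converges to a well-defined limiting flow-over-time and label functions, and that feasibility and the strong equilibrium property survive this limit — this is exactly what is needed for the Zorn's-lemma step to reach $+\infty$ rather than stall at a finite $T^*$. A secondary, more bookkeeping obstacle is checking that the explicit formulas in step~(3) patch consistently with the queue sizes $z_e$ implied by the previously constructed inflows $f_e^+$ (including the degenerate sub-cases $\l'_v=0$ or $\l'_w=0$, where the domains of $f_e^+$ or $f_e^-$ are not extended); this is precisely the role of conditions \eqref{c1}--\eqref{c2}, and verifying it is routine but must be done with care.
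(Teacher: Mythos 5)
Your proposal is correct and follows essentially the same architecture as the paper's argument: existence and label-uniqueness of NTFs (Theorems~\ref{teo:existTF} and~\ref{teo:uniTF}) feed the Koch--Skutella extension step, strictness of $\alpha>0$ comes from Proposition~\ref{lemma:rightCont} and finiteness of $E$, the verification that the extension preserves the strong equilibrium property is deferred to \cite{KochSk:2}, accumulation points are handled via boundedness of the finitely many label slopes, and Zorn's Lemma yields a maximal solution on all of $\R_+$. Your write-up is somewhat more explicit than the paper (the $\min\{\xi_{j+1},\theta_k+\alpha\}$ breakpoint for switching constant pieces, the poset on which Zorn is applied, the use of Theorem~\ref{char}(b) to pass equilibrium through the limit $\theta_\infty$), but these are fleshed-out details rather than a genuinely different route.
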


Dynamic equilibria are not unique in general. For instance, consider the network in Figure \ref{fig11}
but with $\tau_c=\tau_b$ so that any splitting of the outflow $f_a^-(\theta)=\mathbbm{1}_{\{\theta\geq 1\}}$ among 
these two links yields a dynamic equilibrium.  
Nevertheless, using Theorem \ref{teo:uniTF} one can prove
that the earliest-time functions in all sufficiently regular dynamic equilibria are the 
same and coincide with those given by the constructive procedure.

\begin{theorem}\label{Teo-uniqueness}
Suppose that  the inflow $u$ is piecewise constant.
Then, the earliest-time functions $(\l_v)_{v\in V}$ are the same for all 
dynamic equilibria $f$ which are right-continuous.
\end{theorem}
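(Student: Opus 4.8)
The plan is to show that any right-continuous dynamic equilibrium $f$ must have exactly the earliest-time functions produced by the constructive extension procedure of Theorem~\ref{Teo-existence}. The argument is by induction on the breakpoints of the constructed solution, proving that on each interval $[\theta_k,\theta_{k+1}]$ the functions $\l_v$ of $f$ agree with the constructed ones.

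\medskip\noindent\textbf{Setup and the key reduction.} First I would recall from Theorem~\ref{Th:K&S} that at almost every $\theta$ the right derivatives $x'(\theta)$ of the cumulative flow of $f$ form an NTF of value $u(\theta)$ with resetting on $E_\theta^*\subseteq E_\theta'$, with labels $\l'(\theta)=\frac{d\l}{d\theta^+}(\theta)$. The crucial point is that by Theorem~\ref{teo:uniTF}, the label vector $\l'(\theta)$ is \emph{uniquely} determined by the pair $(E_\theta^*,E_\theta')$ and the value $u(\theta)$ --- it does not depend on which NTF is chosen. By Proposition~\ref{lemma:rightCont}, the pair $(E_\theta^*,E_\theta')$ is itself completely determined by the current values $(\l_v(\theta))_{v\in V}$ together with the latencies $\tau_e$ via the explicit formulas \eqref{E'} and \eqref{E*}. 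So the dynamics of $\l$ is governed by a closed relation: $\frac{d\l}{d\theta^+}(\theta) = \Phi(\l(\theta),u(\theta))$ for a function $\Phi$ defined through ``compute $(E^*,E')$ from $\l$ via \eqref{E'}--\eqref{E*}, then take the unique NTF labels.''

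\medskip\noindent\textbf{Propagating uniqueness via an ODE-type comparison.} The plan is to carry out an induction on $k$. At $\theta=0$ all equilibria start from zero flow, so $\l_v(0)$ is forced (it equals the free-flow distance from $s$ to $v$). Assume $\l$ of $f$ coincides with the constructed $\hat\l$ on $(-\infty,\theta_k]$. On $[\theta_k,\theta_{k+1}]$ the constructed solution keeps $(E^*,E')$ constant and is right-linear with slope the unique NTF labels $\hat\l'$; I want to show the equilibrium $f$ does the same. The hard part --- and the main obstacle --- is that a priori the equilibrium's pair $(E_\theta^*,E_\theta')$ could jump around on $[\theta_k,\theta_{k+1}]$ in ways not matching the constructed solution, since we only control $f$ up to almost-everywhere behaviour. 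To handle this I would argue by a continuity/openness argument: right-continuity of $f$ forces right-continuity of the queue sizes $z_e$ and hence (via \eqref{etf}) of the $\l_v$; the set of times where $\l$ of $f$ agrees with $\hat\l$ is then shown to be both relatively closed (by continuity) and relatively open to the right in $[\theta_k,\theta_{k+1}]$. The openness step is where one uses that, at any time $\theta$ where $\l(\theta)=\hat\l(\theta)$, the pair $(E_\theta^*,E_\theta')$ equals the constructed one, the admissible edges that are strictly inactive stay strictly inactive for a short time (by the strict inequality in \eqref{E'} and continuity), and the edges with strictly positive queue keep a positive queue for a short time --- exactly the content of conditions \eqref{c1}--\eqref{c2} --- so on a small interval $(\theta,\theta+\epsilon)$ the equilibrium's pair is \emph{sandwiched} and in fact forced to equal the constructed pair; then Theorem~\ref{teo:uniTF} forces $\frac{d\l}{d\theta^+}=\hat\l'$ there, and integrating gives agreement on $(\theta,\theta+\epsilon)$.

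\medskip\noindent\textbf{Closing the induction.} Combining closedness and right-openness on the connected interval $[\theta_k,\theta_{k+1}]$ (a standard connectedness argument, taking the supremum of the agreement set and showing it cannot be less than $\theta_{k+1}$), I conclude $\l=\hat\l$ on $[\theta_k,\theta_{k+1}]$, completing the induction step. One must also treat the accumulation points $\theta_\infty$ of breakpoints: there $\l$ is continuous and equals the common limit, so agreement persists, and the induction restarts --- Zorn's-lemma / transfinite-exhaustion bookkeeping identical to that in the proof of Theorem~\ref{Teo-existence} shows the constructed breakpoint structure covers all of $\R_+$. Finally, a brief remark that right-continuity of $f$ (not merely measurability) is genuinely needed: without it one can modify $f$ on a null set of times, as in the Example after Definition~\ref{dyneq}, and the pointwise edge-set functions $E_\theta^*,E_\theta'$ could misbehave; the right-continuity hypothesis is precisely what lets us run the openness argument pointwise rather than almost everywhere. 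I expect the delicate point throughout to be justifying that the equilibrium's edge-set pair is \emph{forced} (not merely consistent with) the constructed one on a one-sided neighbourhood --- everything else is bookkeeping on top of Theorems~\ref{Th:K&S}, \ref{teo:existTF}, and \ref{teo:uniTF}.
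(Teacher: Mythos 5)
Your overall plan --- reduce via Theorems~\ref{Th:K&S} and~\ref{teo:uniTF} to uniqueness of the right-derivatives $\frac{d\l_v}{d\theta^+}$, then propagate agreement forward in time by an openness/closedness argument --- is the same as the paper's, but the openness step you sketch leaves precisely the gap you flag at the end. The ``sandwich'' you invoke gives only $E^*_\theta\subseteq E^*_{\theta'}$ and $E'_{\theta'}\subseteq E'_\theta$ for $\theta'$ slightly above $\theta$: a boundary edge $e\in E'_\theta\setminus E^*_\theta$ (one with $\l_w(\theta)=\l_v(\theta)+\tau_e$) can a priori migrate into $E^*$ or drop out of $E'$ in different ways for the two equilibria, so the pair $(E^*_{\theta'},E'_{\theta'})$ is \emph{not} forced to coincide on a right-neighbourhood, and your claim that Theorem~\ref{teo:uniTF} can then be applied uniformly on $(\theta,\theta+\epsilon)$ does not follow.

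The paper sidesteps the need to track $(E^*_\theta,E'_\theta)$ near $\theta$ altogether. From right-continuity of $f$ it deduces that $z_e$, $T_e$, and hence $\l_v$ are right-differentiable \emph{everywhere} with right-continuous right-derivatives. By Theorems~\ref{Th:K&S} and~\ref{teo:uniTF}, at each $\theta$ the vector $\frac{d\l_v}{d\theta^+}(\theta)$ is the unique NTF label for the data $(E^*_\theta,E'_\theta,u(\theta))$; since $u$ is piecewise constant there are only finitely many such triples, hence only finitely many possible label vectors. A right-continuous function taking finitely many values is right-constant, so each $\l_v$ is right-linear. Once this is known, agreement of two equilibria at a time $\theta$ forces (via Proposition~\ref{lemma:rightCont}, since $(E^*_\theta,E'_\theta)$ is determined by $\l(\theta)$) equal right-derivatives at $\theta$, and right-linearity then gives agreement on a nontrivial interval $[\theta,\theta+\epsilon]$; a supremum argument closes. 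You should replace the sandwiching step with this finitely-many-values observation. Note also that the paper's argument compares two arbitrary right-continuous equilibria directly and never references the constructed solution, its breakpoints $\theta_k$, or Theorem~\ref{Teo-existence}, so your inductive scaffolding around the constructed solution is unnecessary.
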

\proof When $f$ is right continuous it follows that the queue lengths $z_e(\theta)$, the exit-time functions
$T_e(\cdot)$ and the earliest-time functions $\l_v(\theta)$ are right-differentiable everywhere
with right-continuous derivatives. Theorem \ref{Th:K&S} implies that $\frac{d\l_v}{d\theta^+}(\cdot)$ are 
an NTF and Theorem \ref{teo:uniTF} shows that these derivatives are unique. Since they can take only 
finitely many values, continuity from the right imply that $\frac{d\l_v}{d\theta^+}(\cdot)$ is right-constant and 
$\l_v(\cdot)$ is right-linear. It follows that any two right-continuous dynamic equilibria must have the same 
earliest-time functions. Indeed, if these functions coincide up to time $\theta$, their right derivatives
at $\theta$ coincide and since they are right-linear they will also coincide on a nontrivial interval 
$[\theta,\theta+\epsilon]$. This implies that in fact the functions must coincide throughout $\R$.
\endproof


\section{Existence of equilibria for inflow rates in $L^p$}
\label{section:Lp}
The previous sections studied dynamic equilibria for a single origin-destination
with piecewise constant inflows. We consider next more general inflow rates and 
then extend the results to multiple origin-destinations. 
We proceed as in  \cite{fri} using a variational inequality for 
a path-flow formulation of  dynamic equilibrium. The analysis is non-constructive and
exploits the following particular case of the existence result \cite[Theorem 24]{Brezis}. 
Let  $(X,\|\cdot\|)$ be a reflexive Banach space and
 $\langle \cdot, \cdot\rangle$ the canonical pairing between $X$ and its dual $X^*$.
If $\A\!:\! K \to X^*$ is a weak-strong continuous map defined on   a nonempty, closed, bounded and convex 
subset $K\subseteq X$, then the following variational inequality  has a solution
$$
\mbox{ Find $x\in K$ such that $\langle \A x,y-x\rangle \geq 0$ for all $y \in K$.} \leqno{\VI(K,\A)}
$$

\subsection{Variational inequality formulation}
Let us consider first the case of a single origin-destination pair $st$ and 
an inflow rate $u\in L^p(0,T)$ where $T$ is a finite horizon and $0<p<1$.
We extend $u(\theta)\equiv0$ outside $[0,T]$ so that $u$ may be 
seen as a function in $\F$.
As before, let $\P$ be the set of paths connecting $s$ to $t$ and denote by $K$ the 
nonempty, bounded, closed and convex set of {\em feasible path-flows} given by
\begin{equation}\label{K}
K=\{h \in L^p(0,T)^{\P}: \mbox{$ \sum_{P \in \P} h_P=u$}\text{ and } h_P \geq 0\text { for all }P\in\P\}.
\end{equation}
The space $X\!=\!L^p(0,T)^\P$ is reflexive with dual $X^*\!=\!L^q(0,T)^{\P}$ where 
$\frac{1}{p}\!+\!\frac{1}{q}\!=\!1$. 
We will show that a dynamic equilibrium can be obtained by solving the variational inequality 
VI$(K,\A)$ with $\A\!:\!K\subseteq X \to X^*$ such that $\A_P(h)\in L^q(0,T)$ 
is the continuous function $\theta\mapsto\l^P\!(\theta)-\theta$ giving the time required to 
travel from $s$ to $t$ using path $P$. In order to properly define this map, our first task is to show that every $h\in K$ 
determines a unique feasible flow-over-time $f$, which in turn induces link travel times 
$T_e$ and path travel times $\l^P\!\!$. This is achieved by the network loading procedure 
described in the next subsection.
In \S\ref{contimes} we establish the weak-strong continuity of $\A$ and then in \S\ref{exi} we conclude
 the existence of a dynamic equilibrium. Finally, \S\ref{multiOD} extends the existence 
 result to multiple origin-destinations.

\subsection{Network loading}\label{netloading}
The following network loading procedure requires $\tau_e>0$ on every link $e$, 
 which we assume from now on.
Let $h=(h_P)_{P\in\P}$ be a given family of path-flows  with $h_P\in\F$ for all $P\in\P$.
A {\em network loading} is a flow-over-time $f=(f^+,f^-)$ together with non-negative and measurable
link-path decompositions
\begin{equation}\label{decomposition}
\begin{array}{ccl}
f_e^+(\theta)&=&\mbox{$\sum_{P\ni e}$}f_{P,e}^+(\theta)\\
f_e^-(\theta)&=&\mbox{$\sum_{P\ni e}$}f_{P,e}^-(\theta)
\end{array}
\end{equation}
such that for all links $e=vw$ and almost all $\theta\in\R$ one has
\begin{equation}\label{pathlink}
f_{P,e}^+(\theta)=\left\{\begin{array}{cl}
h_P(\theta)&\mbox{ if $e$ is the first link of $P$}\\
f_{P,e^*}^-(\theta)& \mbox{ if  $e^*$ is the link in $P$ just before $e$,}
\end{array}\right.
\end{equation}
together with the link transfer equations 
\begin{equation}\label{salida}
\int_{0}^{T_e(\theta)}\!\!\!\! f^-_{P,e}(\xi)\,d\xi=\int_{0}^{\theta}\!\! f^+_{P,e}(\xi)\,d\xi
\end{equation}
where $T_e$ is the link travel time induced by $f_e^+$ through equations \eqref{dam} and \eqref{etf}.
We denote $\omega$ the tuple comprising all the flows $f_e^+,f_{P,e}^+,f_e^-,f_{P,e}^-$ for $e\in E$ and $P\in\P$.
In order to prove the existence and uniqueness of a network loading we
first establish the following technical lemma.
\begin{lemma}\label{proposition:defFmulti}
Let a link-path decomposition of the inflow $f_e^+(\theta)=\sum_{P\ni e}f^+_{P,e}(\theta)$ be given 
over an initial interval $(-\infty,\bar\theta]$. Then there exist unique outflows $f_{P,e}^-\in L^\infty((-\infty,T_e(\bar\theta)])$
satisfying \eqref{salida}, with $0\leq f_{P,e}^-(\xi)\leq \nu_e\,$ for all $\xi\leq T_e(\bar\theta)$.
\end{lemma}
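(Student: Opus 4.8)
The statement asks for existence and uniqueness of outflows $f_{P,e}^-$ on $(-\infty, T_e(\bar\theta)]$ satisfying the link transfer equations \eqref{salida}, subject to $0 \le f_{P,e}^- \le \nu_e$. The key observation is that the inflow $f_e^+ = \sum_{P\ni e} f_{P,e}^+$, being given on $(-\infty,\bar\theta]$, determines the queue length $z_e$ via \eqref{dam} and hence the exit-time function $T_e$ via \eqref{etf}, which is absolutely continuous and nondecreasing. So $T_e$ is a fixed known datum; we must disaggregate the aggregate outflow $f_e^-$ (itself determined by \eqref{con:exp}) across the paths in a way consistent with FIFO service.

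**The construction.** The plan is to first pin down the aggregate outflow: since the queue operates at capacity, \eqref{con:exp} and \eqref{throughput} give $F_e^-(T_e(\theta)) = F_e^+(\theta)$ for all $\theta \le \bar\theta$, and $f_e^-(\xi) \le \nu_e$ with $f_e^-$ supported on $(-\infty, T_e(\bar\theta)]$. Now define the per-path outflow by imposing that the queue serves particles in FIFO order: a particle of path $P$ entering $e$ at time $\theta$ exits at $T_e(\theta)$, so I set $f_{P,e}^-(T_e(\theta)) \, T_e'(\theta) = f_{P,e}^+(\theta)$ wherever $T_e'(\theta) > 0$, distributing the outflow at a given exit instant proportionally to the composition of the inflow at the corresponding entrance instant. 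Concretely, on the set where $T_e$ is strictly increasing this is a change of variables: for $\xi$ in the range of $T_e$, letting $\theta = T_e^{-1}(\xi)$ (well-defined a.e. up to the flat pieces), set
\[
f_{P,e}^-(\xi) = f_e^-(\xi)\,\frac{f_{P,e}^+(\theta)}{f_e^+(\theta)}
\]
when $f_e^+(\theta) > 0$, and $f_{P,e}^-(\xi)=0$ otherwise; on the intervals where $T_e$ is constant (corresponding to times when the queue empties a backlog at rate $\nu_e$) one integrates the inflow composition over the matching entrance interval. I would verify \eqref{salida} by the change of variables $\xi = T_e(\theta)$ in $\int_0^{T_e(\theta)} f_{P,e}^-$, using $\sum_P f_{P,e}^+ = f_e^+$ to recover $\int_0^\theta f_{P,e}^+$, together with \eqref{throughput} applied path-by-path (which holds because $T_e$ carries all of $e$'s inflow up to $\theta$ out by time $T_e(\theta)$). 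The bounds $0 \le f_{P,e}^- \le f_e^- \le \nu_e$ are immediate from the proportional split.

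**Uniqueness.** Given any two solutions $f_{P,e}^-$ and $\tilde f_{P,e}^-$, the difference $g_{P} := f_{P,e}^- - \tilde f_{P,e}^-$ satisfies $\int_0^{T_e(\theta)} g_P(\xi)\,d\xi = 0$ for every $\theta \le \bar\theta$. Since $T_e$ is nondecreasing, continuous, and surjects onto $(-\infty, T_e(\bar\theta)]$, the set $\{T_e(\theta) : \theta \le \bar\theta\}$ contains all of that half-line up to the at most countably many jumps' interior — but $T_e$ has no jumps, being absolutely continuous — so $\int_0^{\sigma} g_P = 0$ for all $\sigma \le T_e(\bar\theta)$, whence $g_P = 0$ a.e. This uses that the link transfer equations are required for \emph{all} $\theta$, not just almost all, exactly as stated.

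**Main obstacle.** The delicate point is handling the flat portions of $T_e$. When $z_e(\theta) > 0$ the exit-time derivative $T_e'(\theta) = f_e^+(\theta)/\nu_e$ can vanish on a set of positive measure (no inflow while the queue drains), so $T_e$ is locally constant there and the naive change of variables breaks down: a whole entrance-interval collapses to a single exit instant $\xi_0$, and the outflow at times just after $\xi_0$ must account for the accumulated backlog with the correct path mixture. The fix is to define the cumulative per-path outflow $F_{P,e}^-$ first — as the monotone rearrangement forced by \eqref{salida}, namely $F_{P,e}^-(\sigma) = \int_0^{\theta} f_{P,e}^+$ with $\theta$ the largest entrance time having $T_e(\theta) \le \sigma$ — check it is absolutely continuous (it is Lipschitz in $\sigma$ with constant $\nu_e$, since between consecutive exit instants at most $\nu_e\,d\sigma$ mass leaves and it is split among paths), and then let $f_{P,e}^- = (F_{P,e}^-)'$. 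I expect this cumulative-first formulation, rather than working with densities, to be the cleanest route and to be what absorbs all the measure-theoretic care; the rest is routine verification that the pieces \eqref{decomposition}–\eqref{salida} hold and that the bounds are respected.
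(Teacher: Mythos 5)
Your construction is the same as the paper's: you define $f_{P,e}^-$ on the range of $T_e$ by distributing the aggregate outflow according to the inflow composition at the matching entrance instant, and you get uniqueness from the surjectivity of $T_e$ onto $(-\infty,T_e(\bar\theta)]$. Your density formula $f_{P,e}^-(\xi)=f_e^-(\xi)\,f_{P,e}^+(\theta)/f_e^+(\theta)$ agrees a.e.\ with the paper's choice $f_{P,e}^-(T_e(\theta))=f_{P,e}^+(\theta)/T_e'(\theta)$ on $\{T_e'>0\}$, since differentiating $F_e^-(T_e(\theta))=F_e^+(\theta)$ gives $f_e^-(T_e(\theta))=f_e^+(\theta)/T_e'(\theta)$ there.

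Two blemishes. First, your verification sketch of \eqref{salida} invokes ``\eqref{throughput} applied path-by-path,'' but that \emph{is} \eqref{salida}, so the argument as phrased is circular. Second, the ``main obstacle'' you identify is not actually an obstacle, and the cumulative-first fix you propose, while workable, is an unnecessary detour. The reason the naive change of variables does not break on flat pieces of $T_e$ is the simple observation (which the paper makes explicit) that, by \eqref{der}, $T_e'(\theta)=0$ forces $f_e^+(\theta)=0$ and hence $f_{P,e}^+(\theta)=0$ almost everywhere. Consequently the identity $f_{P,e}^-(T_e(\theta))\,T_e'(\theta)=f_{P,e}^+(\theta)$ holds a.e.\ regardless of whether $\theta$ lies on a flat piece (both sides vanish) or not (definition of $f_{P,e}^-$), and the change-of-variables formula $\int_0^{T_e(\theta)}f_{P,e}^-(\xi)\,d\xi=\int_0^\theta f_{P,e}^-(T_e(\xi))\,T_e'(\xi)\,d\xi=\int_0^\theta f_{P,e}^+(\xi)\,d\xi$ goes through directly. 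The entrance intervals that collapse to a point under $T_e$ carry no path inflow, so there is no backlog mixture to track; the worry about ``accumulated backlog with the correct path mixture'' conflates the exit instant $T_e(\theta_2)$ of the last queued particle with the interval of exit times $(T_e(\theta_1),T_e(\theta_2))$, which is in fact parameterized by entrance times $\theta<\theta_1$ where $T_e'>0$.
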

\proof Since $T_e$ maps $(-\infty,\bar\theta]$ surjectively onto $(-\infty,T_e(\bar\theta)]$ it is clear that there 
is at most one $f^-_{P,e}$ satisfying \eqref{salida}.
To establish the existence let $A\subseteq (-\infty,\bar\theta]$ be the set of times $\theta$ at which the 
derivative $T_e'(\theta)$ exists and is strictly positive, and set 
$$f^-_{P,e}(T_e(\theta))=\left\{\begin{array}{cl}
f_{P,e}^+(\theta)/T_e'(\theta)&\mbox{for $\theta\in A$}\\
0&\mbox{otherwise.}
\end{array}\right.$$
This unambiguously defines $f_{P,e}^-(\xi)$ for all $\xi\leq T_e(\bar\theta)$ 
as a non-negative measurable function. Moreover, for $\theta\in A$ we have
$$\sum_{P\ni e}f_{P,e}^-(T_e(\theta))=f_e^+(\theta)/T_e'(\theta)=f_e^-(T_e(\theta))\leq\nu_e$$
which implies $0\leq f_{P,e}^-(\xi)\leq \nu_e$ for all $\xi\leq T_e(\bar\theta)$ so that the $f_{P,e}^-$'s are essentially bounded.
Finally, a change of variables in the integral (see \S\ref{L1AC}) gives
$$\int_0^{T_e(\theta)}\!\!\!f_{P,e}^-(\xi)\,d\xi=\int_0^{\theta}\!\! f_{P,e}^-(T_e(\xi))T_e'(\xi)\,d\xi=\int_0^{\theta}\!\! f_{P,e}^+(\xi)\,d\xi$$
where we used the equality $f_{P,e}^-(T_e(\xi))T_e'(\xi)=f_{P,e}^+(\xi)$ which follows from the definition of $f_{P,e}^-(\xi)$ when 
$\xi\in A$ and from the fact that, almost everywhere, \eqref{der} implies that if $T_e'(\xi)=0$ then $f_e^+(\xi)=0$ and 
therefore $f_{P,e}^+(\xi)=0$.
\endproof

\begin{proposition}\label{proposition:constCaminos}
Suppose that $\tau_e>0$ on all links $e$. Then to each path-flow tuple
 $h$ it corresponds a unique network loading $\omega$.
\end{proposition}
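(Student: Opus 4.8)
\emph{Plan of proof.} The plan is to construct and uniquely pin down the loading causally in time, using crucially that every latency is strictly positive. Put $\tau=\min_{e\in E}\tau_e>0$; from \eqref{etf} and $z_e\ge 0$ one gets $T_e(\theta)\ge\theta+\tau_e\ge\theta+\tau$ for all $\theta$, so flow needs at least time $\tau$ to cross any single link. I would then prove by induction on $k\ge 0$ the statement: there is exactly one choice of all the link--path flows $f_{P,e}^+$ and $f_{P,e}^-$, restricted to $(-\infty,k\tau]$, satisfying \eqref{decomposition}, \eqref{pathlink} and \eqref{salida} on that interval. Since $k\tau\to\infty$, letting $k\to\infty$ produces a unique network loading $\omega$ on all of $\R$, and the aggregated $f_e^\pm=\sum_{P\ni e}f_{P,e}^\pm$ are well defined.

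For the base case $k=0$, each $h_P\in\F$ vanishes a.e.\ on $(-\infty,0]$, hence so do the inflows of the links leaving $s$; Lemma~\ref{proposition:defFmulti} then forces their outflows to vanish, and iterating \eqref{pathlink} along every path shows that all $f_{P,e}^\pm$ vanish a.e.\ on $(-\infty,0]$, which is the only possibility. For the inductive step, assume the loading is determined and unique on $(-\infty,k\tau]$. Then $f_e^+=\sum_{P\ni e}f_{P,e}^+$ is known on $(-\infty,k\tau]$, hence so are $z_e$ via \eqref{dam} and $T_e$ via \eqref{etf}. Applying Lemma~\ref{proposition:defFmulti} with $\bar\theta=k\tau$ yields the unique outflows $f_{P,e}^-$ on $(-\infty,T_e(k\tau)]$ compatible with \eqref{salida}; since $T_e(k\tau)\ge(k+1)\tau$, these are in particular fixed on $(-\infty,(k+1)\tau]$. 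Feeding them into \eqref{pathlink} then determines every $f_{P,e}^+$ on $(-\infty,(k+1)\tau]$ --- it equals $h_P$ when $e$ is the first link of $P$, and $f_{P,e^*}^-$ with $e^*$ the preceding link otherwise --- and hence, by restriction, every $f_{P,e}^-$ as well. This closes the induction.

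The step I expect to demand the most care is verifying that there is no circular dependence in the inductive construction. At stage $k+1$ the outflows are computed from the functions $T_e$, which depend only on the inflows up to time $k\tau$ already fixed at stage $k$; the inequality $T_e(k\tau)\ge(k+1)\tau$ --- precisely where the hypothesis $\tau_e>0$ enters --- is what guarantees that a full new block of length $\tau$ of outflow becomes available at each step. The stages are moreover mutually consistent: \eqref{salida} restricted to those $\theta$ with $T_e(\theta)\le(k+1)\tau$ already determines $f_{P,e}^-$ on $(-\infty,(k+1)\tau]$ by the uniqueness in Lemma~\ref{proposition:defFmulti}, so no later stage can alter it.

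Finally I would check that the $f$ obtained is a feasible flow-over-time. Measurability and membership in $\F$ hold because each $f_e^+$ is a finite sum of functions in $\F$ and of the bounded outflows $f_{P,e^*}^-$, while $0\le f_e^-\le\nu_e$; the non-deficit constraints are exactly \eqref{dam}; the capacity constraints --- indeed the full operate-at-capacity identity \eqref{con:exp} --- follow from the relation $f_{P,e}^-(T_e(\theta))T_e'(\theta)=f_{P,e}^+(\theta)$ of Lemma~\ref{proposition:defFmulti} summed over $P\ni e$ and combined with \eqref{der}; and flow conservation at each node $v\neq s$ comes from summing \eqref{pathlink} over the unique in-- and out--edges of $v$ along each path through $v$, the node $s$ reducing to $\sum_{e\in\delta^+(s)}f_e^+=\sum_{P\in\P}h_P$.
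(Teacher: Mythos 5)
Your argument is correct and follows essentially the same inductive scheme as the paper's proof: starting from vanishing flows on $(-\infty,0]$, you extend the loading uniquely in steps of length $\min_e\tau_e>0$ by alternating Lemma~\ref{proposition:defFmulti} (to determine outflows from inflows) with \eqref{pathlink} (to propagate inflows forward). The extra verification of feasibility at the end goes beyond what the proposition literally asserts (a network loading's flow-over-time need not be feasible by definition), but it is a harmless and useful observation.
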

\begin{proof} Let $h=(h_P)_{P\in\P}$ be a given family of path-flows and 
suppose that we have a link-path decomposition satisfying \eqref{decomposition}, \eqref{pathlink}
and \eqref{salida} over an interval $(-\infty,\bar\theta]$. 
For $\bar\theta=0$ this is easy since all flows vanish on the negative axis.
By Lemma \ref{proposition:defFmulti}, the inflow decompositions
$f_e^+(\theta)=\sum_{P\ni e}f_{P,e}^+(\theta)$ over $(-\infty,\bar\theta]$, together with
condition \eqref{salida}, determine unique link-path decompositions for the outflows
$f_e^-(\theta)=\sum_{P\ni e}f_{P,e}^-(\theta)$ over the interval $(-\infty,T_e(\bar\theta)]$. 
These intervals include $(-\infty,\bar\theta+\varepsilon]$ where $\varepsilon=\min_e\tau_e>0$, 
and then using \eqref{pathlink} it follows that the link inflows and their link-path decompositions 
have unique extensions to $(-\infty,\bar\theta+\varepsilon]$. Proceeding 
inductively it follows that the inflows and outflows, together with their link-path decompositions, are 
uniquely defined  on all of $\R$. 
\end{proof}

\subsection{Continuity of path travel times}\label{contimes}
We prove next that the network loading procedure defines path travel time 
maps $h\mapsto \l^P\!$ which are weak-strong continuous from 
$K\subset L^p(0,T)^{\P}$ to the space of continuous functions $C([0,T],\R)$
endowed with the uniform norm. The proof is split into several lemmas.

\begin{lemma}\label{acota}
There exists a constant $M\geq 0$ such that all the flows in the network loading 
corresponding to any $h\in K$ are supported on $[0,M]$.
\end{lemma}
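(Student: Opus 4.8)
The plan is to bound the total flow entering the network and then propagate this bound through the finite acyclic structure of paths. Since $u\in L^p(0,T)$ with $p<1$—wait, actually the relevant integrability is that $u$ vanishes outside $[0,T]$ and each $h_P\le u$ pointwise, so in particular $\sum_P h_P = u$ is supported on $[0,T]$. First I would observe that every path-inflow $h_P$ (and hence the inflow $f_{e_1}^+$ to the first edge $e_1$ of each path $P$) is supported on $[0,T]$. The key mechanism is that passing through an edge $e$ can only \emph{delay} flow: by \eqref{salida} together with the link transfer identity \eqref{throughput}, all flow that has entered $e$ by time $\theta$ has left $e$ by time $T_e(\theta)$, so the support of $f_{P,e}^-$ is contained in $[0,T_e(\sup\operatorname{supp} f_{P,e}^+)]$.

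\medskip

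Next I would make the delay $T_e(\theta)-\theta = \tau_e + z_e(\theta)/\nu_e$ explicit. The latency part contributes $\tau_e$; the queueing part $z_e(\theta)/\nu_e$ must be bounded uniformly over $h\in K$. From \eqref{dam}, $z_e(\theta)\le \int_0^\theta [f_e^+(\xi)-\nu_e]_+\,d\xi \le \int_0^\infty f_e^+(\xi)\,d\xi \le \|u\|_{L^1(0,T)}$, since flow conservation and the acyclicity of the routing imply that the total mass ever entering any edge is at most the total injected mass $U(\infty)=\int_0^T u$, which is finite because $u\in L^p(0,T)\subseteq L^1(0,T)$ on a bounded interval (using $p<1$... in any case $u$ is integrable as stated, $u\in\F$). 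Hence $z_e(\theta)/\nu_e \le \|u\|_{L^1}/\min_e\nu_e =: D_0$ for every edge, uniformly in $h$. Therefore $T_e(\theta)\le \theta+\tau_e+D_0$ for all $\theta$ and all $e$.

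\medskip

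Then I would iterate along paths. Writing $P=(e_1,\dots,e_k)$, the exit time of any particle of $h_P$ is $\l^P(\theta)=T_{e_k}\circ\cdots\circ T_{e_1}(\theta)$, and since each $T_{e_i}$ adds at most $\tau_{e_i}+D_0$, we get $\l^P(\theta)\le \theta + \sum_{i=1}^k(\tau_{e_i}+D_0)$. As paths are simple, $k\le |V|-1$, so $\l^P(\theta)\le \theta + (|V|-1)D_0 + \sum_{e\in E}\tau_e =: \theta + D_1$ for every path. Consequently every outflow $f_{P,e}^-$ and every inflow $f_{P,e}^+$ is supported within $[0,T+D_1]$, and likewise the aggregated flows $f_e^\pm$; taking $M = T+D_1 = T + (|V|-1)\|u\|_{L^1(0,T)}/\min_e\nu_e + \sum_{e\in E}\tau_e$ gives the desired uniform bound. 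I would state the support propagation along $P$ as a clean induction on the position of the edge in the path.

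\medskip

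The main obstacle I anticipate is the queue-length bound step: one must be careful that $\int_0^\infty f_e^+ \le U(\infty)$ genuinely holds for \emph{every} edge. This uses that the network loading conserves mass and that dynamic shortest paths (hence the support of the $h_P$ that actually get loaded) are acyclic, so no mass circulates and re-enters an edge; since in this section the $h_P$ are arbitrary feasible path-flows on \emph{simple} paths, each unit of injected mass traverses any given edge at most once, giving $\int f_e^+\le\int u$ directly. The rest is bookkeeping: invoking \eqref{salida}, \eqref{throughput}, \eqref{etf} and \eqref{dam}, and an induction over the (finitely many, acyclically ordered) edges of each path.
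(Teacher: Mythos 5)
Your proposal is correct and follows essentially the same route as the paper: bound the queue lengths uniformly by $\bar z=\int_0^T u$ using the fact that the cumulative inflow into any edge is at most the total injected mass (which the paper establishes by induction along paths via \eqref{pathlink}--\eqref{salida}, showing $\int_\R f_{P,e}^+ = \int_\R h_P$), deduce a uniform per-link delay bound $\tau_e+\bar z/\nu_e$, and propagate supports along simple paths to get $M$. The only cosmetic difference is the exact expression for $M$; your aside about $p<1$ is a typo inherited from the paper's \S\ref{section:Lp} (the intended range is $1<p<\infty$), and as you note this is immaterial since $u\in\F$ is already locally integrable.
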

\proof We claim that the queue lengths are bounded by $z_e(\theta)\leq\bar z=\int_0^Tu(\xi)\,d\xi$. 
Indeed, an inductive argument based on \eqref{pathlink} and \eqref{salida} shows that 
for each path $P$ and each link $e\in P$ we have 
$\int_\R f_{P,e}^+(\xi)\,d\xi=\int_\R h_P(\xi)\,d\xi$. Since $z_e(\theta)\leq F_e^+(\theta)$, using 
\eqref{decomposition} we get
$$z_e(\theta)\leq F_e^+(\theta)=\sum_{P\ni e}\int_0^\theta\!\!\! f_{P,e}^+(\xi)\,d\xi\leq\sum_P\int_\R \!h_P(\xi)\,d\xi=\int_0^T\!\!\!u(\xi)\,d\xi.$$
This bound implies that the time to traverse a link $e$ is at most $\bar z/\nu_e+\tau_e$.
Denoting by $\delta$ the maximum of these quantities over all $e\in E$ and setting $M=T+m\delta$ where $m$ 
is the maximum number of links in all paths $P\in\P$ then $\l^P\!(\theta)\leq M$ for all $P\in\P$ and
$\theta\in [0,T]$. This, together with \eqref{pathlink} and \eqref{salida}, implies in turn that all the flows in a 
network loading are supported on the interval $[0,M]$. 
\endproof
 
\begin{lemma}\label{proposition:defFmenos}
The maps $f_e^+\mapsto z_e$ and $f_e^+\mapsto T_e$ defined by \eqref{dam} and \eqref{etf}
are weak-strong continuous 
from $L^p(0,M)$ to $C([0,M],\R)$.
 \end{lemma}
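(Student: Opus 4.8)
The plan is to prove the two maps separately, the second being an easy consequence of the first. For the map $f_e^+\mapsto z_e$, recall from \eqref{dam} that
$$z_e(\theta)=\max_{r\in[0,\theta]}\int_r^\theta[f_e^+(\xi)-\nu_e]\,d\xi=\max_{r\in[0,\theta]}\big(F_e^+(\theta)-F_e^+(r)-\nu_e(\theta-r)\big).$$
First I would fix a sequence $f^{(n)}\rightharpoonup f$ weakly in $L^p(0,M)$ and observe that weak convergence in $L^p(0,M)$ forces pointwise convergence of the primitives $F^{(n)}(\theta)=\int_0^\theta f^{(n)}$ to $F(\theta)=\int_0^\theta f$ for every $\theta\in[0,M]$, since $\mathbbm{1}_{[0,\theta]}\in L^q(0,M)$ is a fixed test function. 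Moreover a weakly convergent sequence is norm-bounded, say $\|f^{(n)}\|_{L^p}\le C$, and H\"older's inequality then gives the equicontinuity estimate $|F^{(n)}(\theta')-F^{(n)}(\theta)|\le C|\theta'-\theta|^{1/q}$ uniformly in $n$; by Arzel\`a--Ascoli the pointwise convergence $F^{(n)}\to F$ upgrades to uniform convergence on $[0,M]$.

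Next I would transfer this uniform convergence through the $\max$ operation. Writing $G^{(n)}(\theta,r)=F^{(n)}(\theta)-F^{(n)}(r)-\nu_e(\theta-r)$ and $G(\theta,r)$ for the limit, uniform convergence $F^{(n)}\to F$ yields $\sup_{0\le r\le\theta\le M}|G^{(n)}(\theta,r)-G(\theta,r)|\to 0$, and since $|\max_r G^{(n)}(\theta,r)-\max_r G(\theta,r)|\le\sup_r|G^{(n)}(\theta,r)-G(\theta,r)|$ we conclude $z_e^{(n)}\to z_e$ uniformly on $[0,M]$, i.e.\ in $C([0,M],\R)$. This proves that $f_e^+\mapsto z_e$ is weak-strong continuous. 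For the second map, \eqref{etf} gives $T_e(\theta)=\theta+z_e(\theta)/\nu_e+\tau_e$, which is an affine (hence Lipschitz, hence strong-strong continuous) function of $z_e$ in the uniform norm; composing with the previous step gives the weak-strong continuity of $f_e^+\mapsto T_e$.

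The one point requiring a little care is the passage from weak convergence to uniform convergence of primitives: weak $L^p$ convergence does not in general give strong $L^p$ convergence of $f^{(n)}$, so one must genuinely go through the primitives and use Arzel\`a--Ascoli rather than hoping to pass a limit directly inside the integrand of \eqref{dam}. I expect this to be the main (and only real) obstacle; once the uniform convergence of $F^{(n)}$ is in hand, the stability of the supremum and the affine dependence in \eqref{etf} are routine. One should also note that all flows are supported on $[0,M]$ by Lemma \ref{acota}, so working on the fixed bounded interval $[0,M]$ is legitimate and the constant $M$ does not depend on $h\in K$.
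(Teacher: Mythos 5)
Your proof is correct and follows essentially the same route as the paper's. The paper also reduces to the compactness of the integration operator $I:L^p(0,M)\to C([0,M],\R)$ (via Arzel\`a--Ascoli), notes that compact operators are weak-strong continuous, and then handles the $\max$ in \eqref{dam} by observing that $y\mapsto\big(\theta\mapsto\min_{u\in[0,\theta]}y(u)\big)$ is nonexpansive on $C([0,M],\R)$ — which is exactly your sup-norm estimate on $G^{(n)}-G$ in slightly different clothing. You simply unwind the abstract compactness fact into its two ingredients (pointwise convergence of primitives against indicator test functions, plus the H\"older equicontinuity bound $|F^{(n)}(\theta')-F^{(n)}(\theta)|\le C|\theta'-\theta|^{1/q}$), which is a fine and arguably more self-contained presentation; it does rely on $q<\infty$, i.e.\ $p>1$, which is indeed assumed in \S\ref{section:Lp}.
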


\proof The continuity of $f_e^+\mapsto T_e$ is immediate from that of $f_e^+\mapsto z_e$.
To show the latter we recall that Arzela-Ascoli's theorem implies that the integration map 
$I:L^p(0,M)\to C([0,M],\R)$ defined by $Ix(\theta)=\int_0^\theta x(\xi)d\xi$ is a compact operator, 
and hence it is weak-strong continuous. It follows that the map $f_e^+\mapsto y_e$
given by $y_e(\theta)=\int_0^\theta[f_e^+(\xi)-\nu_e]d\xi$ is weak-strong continuous
and then the same holds for $f_e^+\mapsto z_e$ since \eqref{dam} gives
$$z_e(\theta)=\max_{u\in[0,\theta]}y_e(\theta)\!-\!y_e(u)=y_e(\theta)-\!\!\min_{u\in[0,\theta]}y_e(u)$$
and the map $y\mapsto Hy$ operating on $C([0,M],\R)$ as $Hy(\theta)=\min_{u\in[0,\theta]}y_e(u)$
is nonexpansive.
\endproof

\begin{lemma}\label{omega}
Let $\Omega$ denote the set of all the restrictions to $[0,M]$ of the pairs $(h,\omega)$ where $h\in K$ 
and $\omega$ is the corresponding network loading. Then $\Omega$ is a bounded and weakly closed
subset of $L^p(0,M)^k$ where $k$ is the dimension of the tuple $(h,\omega)$, namely $k=|\P|+2|E|+2|\P||E|$.
 \end{lemma}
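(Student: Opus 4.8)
The plan is to verify that $\Omega$ is bounded and weakly closed as a subset of $L^p(0,M)^k$. Boundedness is the routine part: by Lemma \ref{acota} every flow in a network loading corresponding to some $h\in K$ is supported on $[0,M]$, and all these flows are pointwise dominated by simple expressions in $u$. Indeed $0\le h_P\le u$, the link inflows satisfy $f_e^+=\sum_{P\ni e}f_{P,e}^+$ with $\int_\R f_{P,e}^+=\int_\R h_P$, the capacity constraints give $f_e^-\le\nu_e$ and $f_{P,e}^-\le\nu_e$ (the latter from Lemma \ref{proposition:defFmulti}), and $f_{P,e}^+$ is either $h_P$ or some $f_{P,e^*}^-\le\nu_e$. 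Hence every component of $(h,\omega)$ has $L^p(0,M)$-norm bounded by a constant depending only on $u$, $M$ and the $\nu_e$'s, which gives boundedness.

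For weak closedness, the strategy is to take a sequence $(h^n,\omega^n)\in\Omega$ with $(h^n,\omega^n)\rightharpoonup(h,\omega)$ weakly in $L^p(0,M)^k$ and show that $(h,\omega)$ is again a network loading for the limiting $h$. First, since $K$ is convex and strongly closed it is weakly closed, so $h\in K$. Next, the defining relations of a network loading come in three groups, and each must survive the weak limit. The link-path decompositions \eqref{decomposition} and the coupling \eqref{pathlink} are linear (pointwise-a.e.) identities among the components, hence preserved under weak convergence: a weak limit of functions satisfying a fixed linear relation still satisfies it. The capacity-type bounds $0\le f^-_{P,e}\le\nu_e$ are preserved because the set of functions with values in $[0,\nu_e]$ is convex and strongly closed, hence weakly closed. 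The genuine difficulty lies in the transfer equations \eqref{salida}, because these involve the link travel times $T_e=T_e^n$, which themselves depend on $f_e^{+,n}$ in a nonlinear way through \eqref{dam}–\eqref{etf}.

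The key tool to handle \eqref{salida} is Lemma \ref{proposition:defFmenos}: the map $f_e^+\mapsto T_e$ is weak-strong continuous from $L^p(0,M)$ to $C([0,M],\R)$. So from $f_e^{+,n}\rightharpoonup f_e^+$ we get $T_e^n\to T_e$ uniformly on $[0,M]$, where $T_e$ is precisely the travel time induced by the limit inflow $f_e^+$. Now I would pass to the limit in the identity $\int_0^{T_e^n(\theta)}f_{P,e}^{-,n}(\xi)\,d\xi=\int_0^\theta f_{P,e}^{+,n}(\xi)\,d\xi$. The right-hand side converges to $\int_0^\theta f_{P,e}^+(\xi)\,d\xi$ since weak convergence in $L^p$ tests against the indicator $\mathbf 1_{[0,\theta]}\in L^q$. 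For the left-hand side, write $\int_0^{T_e^n(\theta)}f_{P,e}^{-,n}=\int_0^{M}\mathbf 1_{[0,T_e^n(\theta)]}(\xi)\,f_{P,e}^{-,n}(\xi)\,d\xi$; the functions $\mathbf 1_{[0,T_e^n(\theta)]}$ converge in $L^q$-norm to $\mathbf 1_{[0,T_e(\theta)]}$ because $T_e^n(\theta)\to T_e(\theta)$ and the $L^q$ distance between two such indicators is $|T_e^n(\theta)-T_e(\theta)|^{1/q}$, while $f_{P,e}^{-,n}\rightharpoonup f_{P,e}^-$ weakly; the pairing of a strongly convergent sequence with a weakly convergent one converges, giving $\int_0^{T_e(\theta)}f_{P,e}^-(\xi)\,d\xi$. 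Hence \eqref{salida} holds in the limit with the correct $T_e$, so $(h,\omega)\in\Omega$. This passage through \eqref{salida}, reconciling the nonlinear dependence of $T_e$ on $f_e^+$ with weak convergence, is the main obstacle, and Lemma \ref{proposition:defFmenos} is exactly what makes it work.
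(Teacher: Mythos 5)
Your proof is correct and follows essentially the same route as the paper: boundedness from Lemma \ref{acota} and the pointwise bounds from Lemma \ref{proposition:defFmulti}, stability of the linear relations \eqref{decomposition}--\eqref{pathlink} under weak limits, and then passing to the limit in \eqref{salida} by using the weak-strong continuity of $f_e^+\mapsto T_e$ from Lemma \ref{proposition:defFmenos}. The only cosmetic difference is that you phrase the limit in \eqref{salida} as pairing a strongly $L^q$-convergent sequence of indicators $\mathbf 1_{[0,T_e^n(\theta)]}$ with the weakly convergent $f_{P,e}^{-,n}$, whereas the paper splits the integral over $[0,T_e(\theta)]$ and $[T_e(\theta),T_e^\alpha(\theta)]$ and bounds the second piece by H\"older; the underlying estimate $\|\mathbf 1_{[0,a]}-\mathbf 1_{[0,b]}\|_q=|a-b|^{1/q}$ is exactly the paper's H\"older bound, so these are the same argument.
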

 \proof From Lemma \ref{acota} we know that all flows $(h,\omega)\in\Omega$ 
are supported on $[0,M]$, while \eqref{pathlink} and Lemma \ref{proposition:defFmulti} 
imply that they are uniformly bounded in $L^p(0,M)$.
Let us take a weakly convergent net $(h^\alpha,\omega^\alpha)\rightharpoonup (h,\omega)$
with $(h^\alpha,\omega^\alpha)\in\Omega$. It is clear that conditions 
\eqref{decomposition} and \eqref{pathlink} are stable under weak limits
so that $\omega$ satisfies these equations. In order to show \eqref{salida}
it suffices to pass to the limit in 
\begin{equation}\label{salidaalpha}
\int_{0}^{T_e^\alpha(\theta)}\!\!\!\! f^{\alpha-}_{P,e}(\xi)\,d\xi=\int_{0}^{\theta}\!\! f^{\alpha+}_{P,e}(\xi)\,d\xi.
\end{equation}
The right hand side converges to  $\int_{0}^{\theta} f^+_{P,e}(\xi)\,d\xi$
while the integral on the left can be written as the sum 
$$\int_{0}^{T_e^\alpha(\theta)}\!\!\!\! f^{\alpha-}_{P,e}(\xi)\,d\xi=\int_{0}^{T_e(\theta)}\!\!\!\! f^{\alpha-}_{P,e}(\xi)\,d\xi+\int_{T_e(\theta)}^{T_e^\alpha(\theta)}\!\!\!\! f^{\alpha-}_{P,e}(\xi)\,d\xi.$$
The first term on the right converges to $\int_{0}^{T_e(\theta)} f^-_{P,e}(\xi)\,d\xi$ while the second
converges to zero. Indeed, by H\"older's inequality we have
$$\left|\int_{T_e(\theta)}^{T_e^\alpha(\theta)}\!\!\!\! f^{\alpha-}_{P,e}(\xi)\,d\xi\right|\leq \|f^{\alpha-}_{P,e}\|_p\sqrt[q]{|T_e^\alpha(\theta)-T_e(\theta)|}$$
so the conclusion follows since $0\leq f_{P,e}^{\alpha-}(\xi)\leq \nu_e$ implies $\|f_{P,e}^{\alpha^-}\|_p\leq \nu_e\sqrt[p]{M}$ 
while Lemma \ref{proposition:defFmenos} gives $T_e^\alpha(\theta)\to T_e(\theta)$. Hence we may pass to the limit in 
\eqref{salidaalpha} which proves that $w$ satisfies \eqref{salida} and therefore $(h,\omega)\in\Omega$ as was to be proved.
 \endproof

\begin{lemma}
\label{lemma:gammaCont}
The maps $h\mapsto T_e$ defined by the network loading procedure are 
weak-strong continuous from $K\subset L^p(0,T)^{\P}$ to $C([0,M],\R)$.
\end{lemma}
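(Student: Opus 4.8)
The plan is to avoid proving directly that the map $h\mapsto\omega$ is weak-weak continuous (the network loading is defined by an implicit inductive transfer, so this would be awkward), and instead to combine three ingredients already available: the weak closedness of $\Omega$ (Lemma \ref{omega}), the uniqueness of the network loading (Proposition \ref{proposition:constCaminos}), and the elementary continuity statement of Lemma \ref{proposition:defFmenos}. The glue is a standard subnet argument: I recall that in any topological space one has $T_e^\alpha\to T_e$ if and only if every subnet of $(T_e^\alpha)$ admits a further subnet converging to $T_e$.

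So let $h^\alpha\rightharpoonup h$ in $K$, let $\omega^\alpha$ denote the corresponding network loadings, and fix an arbitrary subnet. By Lemma \ref{acota} together with \eqref{pathlink} and Lemma \ref{proposition:defFmulti}, the tuples $(h^\alpha,\omega^\alpha)$ are uniformly bounded in the reflexive space $L^p(0,M)^k$, hence relatively weakly compact; passing to a further subnet we obtain $\omega^\alpha\rightharpoonup\omega$ for some $\omega$, and therefore $(h^\alpha,\omega^\alpha)\rightharpoonup(h,\omega)$ in $L^p(0,M)^k$. Since $\Omega$ is weakly closed by Lemma \ref{omega}, the limit $(h,\omega)$ lies in $\Omega$, i.e. $\omega$ is a network loading of $h$. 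By Proposition \ref{proposition:constCaminos} the network loading of $h$ is unique, so $\omega$ is exactly that loading; in particular its link-inflow component $f_e^+$ and the associated travel time $T_e$ (determined through \eqref{dam} and \eqref{etf}) are precisely the ones appearing in the statement, independently of the chosen subnet.

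It then remains to pass to the limit. The component $f_e^{\alpha+}$ of $\omega^\alpha$ converges weakly to $f_e^+$ in $L^p(0,M)$, while $T_e^\alpha$ is by construction the link travel time induced by $f_e^{\alpha+}$. Lemma \ref{proposition:defFmenos} therefore gives $T_e^\alpha\to T_e$ in $C([0,M],\R)$. Since this holds for a further subnet of every subnet of $(T_e^\alpha)$, and the limit $T_e$ is always the same, we conclude $T_e^\alpha\to T_e$, which is the asserted weak-strong continuity of $h\mapsto T_e$.

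The main obstacle is conceptual rather than computational: the network loading has no closed form, so quantitative continuity estimates for $h\mapsto\omega$ are unpleasant to obtain. The device that sidesteps this is Lemma \ref{omega}: once $\Omega$ is known to be weakly closed, weak compactness produces a limit loading and uniqueness pins it down, so no control on the map $h\mapsto\omega$ is needed — the genuine analytic work, namely passing to the limit in the transfer equation \eqref{salida}, has already been carried out in that lemma.
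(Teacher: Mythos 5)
Your proof is correct and follows essentially the same route as the paper: boundedness from Lemma \ref{acota}, weak closedness of $\Omega$ from Lemma \ref{omega} combined with uniqueness of the loading (Proposition \ref{proposition:constCaminos}) to identify the weak limit $\omega$, and finally Lemma \ref{proposition:defFmenos} to upgrade $f_e^{\alpha+}\rightharpoonup f_e^+$ to $T_e^\alpha\to T_e$ uniformly. The only difference is cosmetic — the paper compresses your subnet argument into the one-line observation that every weak accumulation point of $\omega^\alpha$ must equal the unique loading of $h$, hence $\omega^\alpha\rightharpoonup\omega$.
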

\begin{proof} Take a weakly convergent net $h^\alpha\rightharpoonup h$ in $K$ and
let $\omega^\alpha$ be the corresponding network loading. From Lemma \ref{acota} we know that the 
net $\omega^\alpha$ is bounded in $L^p(0,M)$, while Lemma \ref{omega} implies that 
any weak accumulation point of $w^\alpha$ is a network loading for $h$. Since the latter 
is unique it follows that $w^\alpha\rightharpoonup w$.
In particular $f_e^{\alpha+}\rightharpoonup f_e^+$ weakly in $L^p(0,M)$ 
so that the conclusion $T_e^\alpha\to T_e$ strongly in $C([0,M],\R)$ is a consequence of
Lemma \ref{proposition:defFmenos}.
\end{proof}

\begin{lemma}\label{timecontinuity} For each $P\in\P$ the map $h\mapsto \l^P$ defined by
the network loading procedure is weak-strong continuous from $K\subset L^p(0,T)^{\P}$ to $C([0,T],\R)$.
\end{lemma}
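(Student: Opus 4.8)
The plan is to derive the continuity of $h\mapsto\l^P$ from the already-established continuity of the link travel time maps $h\mapsto T_e$ (Lemma \ref{lemma:gammaCont}), together with the fact that $\l^P$ is the composition of the $T_e$'s along $P$ as in \eqref{path_time}. Concretely, write $P=(e_1,\dots,e_k)$ so that $\l^P(\theta)=T_{e_k}\circ\cdots\circ T_{e_1}(\theta)$, and argue by induction on the number of links that the partial composition $\l^{P_j}:=T_{e_j}\circ\cdots\circ T_{e_1}$ depends weak-strong continuously on $h$.

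For the inductive step, suppose $h^\alpha\rightharpoonup h$ in $K$. By Lemma \ref{lemma:gammaCont} we have $T_{e_j}^\alpha\to T_{e_j}$ strongly in $C([0,M],\R)$ for every link, and by induction $\l^{P_{j-1}}_\alpha\to\l^{P_{j-1}}$ strongly in $C([0,T],\R)$. I would then estimate, for $\theta\in[0,T]$,
\begin{align*}
|\l^{P_j}_\alpha(\theta)-\l^{P_j}(\theta)|
&=\bigl|T_{e_j}^\alpha(\l^{P_{j-1}}_\alpha(\theta))-T_{e_j}(\l^{P_{j-1}}(\theta))\bigr|\\
&\leq\bigl|T_{e_j}^\alpha(\l^{P_{j-1}}_\alpha(\theta))-T_{e_j}(\l^{P_{j-1}}_\alpha(\theta))\bigr|
+\bigl|T_{e_j}(\l^{P_{j-1}}_\alpha(\theta))-T_{e_j}(\l^{P_{j-1}}(\theta))\bigr|.
\end{align*}
The first term is bounded by $\|T_{e_j}^\alpha-T_{e_j}\|_{C([0,M],\R)}$, which goes to zero, provided the arguments stay inside $[0,M]$; this is guaranteed by Lemma \ref{acota}, since all relevant composed travel times are at most $M$. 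The second term goes to zero because $T_{e_j}$ is (uniformly) continuous on the compact interval $[0,M]$ and $\l^{P_{j-1}}_\alpha\to\l^{P_{j-1}}$ uniformly, so the arguments $\l^{P_{j-1}}_\alpha(\theta)$ converge uniformly in $\theta$. Taking the supremum over $\theta\in[0,T]$ yields $\l^{P_j}_\alpha\to\l^{P_j}$ uniformly, completing the induction; the case $j=k$ is the statement of the lemma.

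The only mildly delicate point — and the one I would be careful to spell out — is the uniformity in the second term: one needs that $T_{e_j}$ restricted to $[0,M]$ is \emph{uniformly} continuous (which holds, being continuous on a compact interval, indeed absolutely continuous by \eqref{etf} and Lemma \ref{proposition:defFmenos}), so that $\sup_\theta|T_{e_j}(\l^{P_{j-1}}_\alpha(\theta))-T_{e_j}(\l^{P_{j-1}}(\theta))|$ is controlled by a modulus of continuity evaluated at $\|\l^{P_{j-1}}_\alpha-\l^{P_{j-1}}\|_\infty$. Everything else is routine bookkeeping: the base case $j=1$ is exactly Lemma \ref{lemma:gammaCont}, and the containment of all arguments in $[0,M]$ is Lemma \ref{acota}. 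I do not anticipate any real obstacle; the lemma is essentially the observation that a finite composition of uniformly-convergent, uniformly-continuous maps converges uniformly.
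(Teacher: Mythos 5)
Your proof is correct and follows the same route as the paper's: express $\l^P$ as the composition $T_{e_k}\circ\cdots\circ T_{e_1}$, then use the elementary estimate $|g^\alpha\!\circ f^\alpha - g\circ f|\le\|g^\alpha-g\|_\infty + |g(f^\alpha)-g(f)|$ together with uniform continuity of each $T_e$ on a compact interval and uniform convergence from Lemma \ref{lemma:gammaCont}. The paper phrases this as a single lemma on continuity of composition (tracking nested intervals $[0,M_i]$ with $M_i=T+i\delta$) rather than an explicit induction, but the estimate and the supporting facts (Lemma \ref{acota} for the range containment, Lemma \ref{lemma:gammaCont} for per-link convergence) are identical.
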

\proof Let $P=(e_1,e_2,\ldots,e_k)$. Set $M_i=T+i\delta$ with $\delta$ as in the proof of Lemma \ref{acota}
and consider the restrictions $T_{e_i}:[0,M_{i-1}]\to[0,M_i]$ so that for all $\theta\in [0,T]$
\begin{align}\label{path_time2}
    \l^P\!(\theta) & = T_{e_k}\circ\cdots\circ T_{e_1}(\theta).
\end{align}
By Lemma \ref{lemma:gammaCont} the maps $h\mapsto T_{e_i}$ are weak-strong continuous,
so the conclusion follows by noting that composition is a continuous operation.
More precisely,  the map $(f,g)\mapsto g\circ f$ defined on the spaces
$$\circ:C([0,M_{i-1}],[0,M_i])\times C([0,M_i],[0,M_{i+1}])\to C([0,M_{i-1}],[0,M_{i+1}]$$
is a continuos map (with respect to uniform convergence). Indeed, consider a strongly 
convergent net $(f^\alpha,g^\alpha)\to(f,g)$. Then for each $\theta\in [0,M_{i-1}]$ we have
$$|g^\alpha\!\circ\! f^\alpha(\theta)-g\!\circ\! f(\theta)|\leq 
|g^\alpha(f^\alpha(\theta))-g(f^\alpha(\theta))|+|g(f^\alpha(\theta))-g(f(\theta))|.$$
The first term on the right can be bounded by $\|g^\alpha-g\|_\infty$ which tends to 0,
while the second term also tends to zero uniformly in $\theta$ since $g$ is uniformly 
continuous and $\|f^\alpha-f\|_\infty$ tends to zero.
\endproof

\subsection{Existence of dynamic equilibrium for a single origin-destination}\label{exi}
With these preliminary results we may now prove that the variational inequality VI$(K,\A)$ 
has a solution, and the corresponding network loading gives a dynamic equilibrium.
\begin{theorem}\label{teo:Lp}
Let $u\in L^p(0,T)$  with $1<p<\infty$ and assume that $\tau_e>0$ on every link $e$. 
Then there exists a dynamic equilibrium.
\end{theorem}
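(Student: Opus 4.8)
The plan is to realize a dynamic equilibrium as the network loading of a solution of the variational inequality $\VI(K,\A)$, and the argument splits into four steps, the last of which is the substantive one.

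\emph{Step 1 (solving the variational inequality).} Since $1<p<\infty$ the space $X=L^p(0,T)^\P$ is reflexive, and the set $K$ of \eqref{K} is nonempty, bounded, closed and convex. Because $\tau_e>0$ on every link, Proposition \ref{proposition:constCaminos} assigns to each $h\in K$ a unique network loading, which induces link travel times $T_e$ and path travel times $\l^P$ via \eqref{path_time}; hence $\A_P(h):\theta\mapsto\l^P\!(\theta)-\theta$ is well defined and, being continuous on the compact interval $[0,T]$, belongs to $L^q(0,T)$. Lemma \ref{timecontinuity} gives that $h\mapsto\l^P$ is weak-strong continuous from $K$ into $C([0,T],\R)$, and composing with the continuous embedding $C([0,T],\R)\hookrightarrow L^q(0,T)$ shows that $\A:K\to X^*$ is weak-strong continuous. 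Applying \cite[Theorem 24]{Brezis} recalled above yields $h^*\in K$ with $\langle\A h^*,y-h^*\rangle\geq 0$ for all $y\in K$.

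\emph{Step 2 (pointwise optimality).} Next I would localize this inequality. The set $\P$ of simple $s$-$t$ paths is finite and the only coupling between coordinates in $K$ is the pointwise identity $\sum_P h_P=u$. Hence if there were paths $P,Q$ and a set $B$ of positive measure on which $h^*_P>0$ and $\l^P>\l^Q$, then, after shrinking $B$ to positive measure so that $h^*_P\geq\delta$ and $\l^P-\l^Q\geq\delta$ there for some $\delta>0$, the competitor $y\in K$ obtained from $h^*$ by rerouting the path-flow $h^*_P$ onto $Q$ on $B$ and leaving all other coordinates unchanged would give $\langle\A h^*,y-h^*\rangle=\int_B h^*_P(\l^Q-\l^P)\,d\theta\leq-\delta^2|B|<0$, a contradiction. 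Therefore for a.e.\ $\theta$ the vector $h^*(\theta)$ minimizes $\sum_P(\l^P\!(\theta)-\theta)g_P$ over $\{g\geq 0:\sum_P g_P=u(\theta)\}$; equivalently, $h^*_P(\theta)>0$ implies $\l^P\!(\theta)=\min_{Q\in\P}\l^Q\!(\theta)=\l_t(\theta)$.

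\emph{Step 3 (feasibility of the loading).} Let $f$ be the network loading of $h^*$. I would check that $f$ is a feasible flow-over-time: the bounds $0\leq f_e^-\leq\nu_e$ and essential boundedness of all the flows come from Lemma \ref{proposition:defFmulti}, the non-deficit constraint $z_e\geq 0$ from formula \eqref{dam}, flow conservation from \eqref{decomposition}--\eqref{pathlink} together with $\sum_P h^*_P=u$, and the operate-at-capacity condition \eqref{con:exp} is exactly what the construction of $f_e^-$ in Lemma \ref{proposition:defFmulti}, combined with the expression \eqref{der} for $T_e'$, produces. By \eqref{eqL1P} the earliest-arrival functions induced by $f$ are $\l_w=\min_{P\in\P_w}\l^P$, so they coincide with the objects appearing in Step 2.

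\emph{Step 4 (from path optimality to the edge condition --- the hard part).} Finally I would translate the pointwise statement of Step 2 into Definition \ref{dyneq}, or equivalently into condition (b) or (c) of Theorem \ref{char}. The mechanism is that every $T_e$ is non-decreasing (by \eqref{der}), so a particle riding a shortest $s$-$t$ path that carries positive flow traverses only edges that are active at the times at which it reaches them; tracing the link-path decomposition through the transfer equations \eqref{salida} then shows that, among times of the form $\l_v(\cdot)$, the inflow $f_e^+$ can only be positive where $e$ is active, which is precisely $f_e^+=0$ a.e.\ on $\l_v(\R\setminus\Theta_e)$. I expect this step to be the main obstacle, for two reasons: the change of variables relating a particle's entry time to its arrival time at an intermediate edge runs through a composition of maps $T_{e_i}$ that need not be strictly increasing (handled, as in Lemma \ref{proposition:defFmulti}, by the fact that $T_{e_i}'(\xi)=0$ forces $f_{e_i}^+(\xi)=0$), and the maps $\l_v$ may be constant on nontrivial intervals, so the argument must be carried out at the level of the image sets $\l_v(\R\setminus\Theta_e)$ and the a.e.\ clause of Definition \ref{dyneq} --- exactly the subtlety that motivated that definition. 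I would discharge these points with the change-of-variables and Sorgenfrey-type lemmas of \S\ref{L1AC}, in the same style as the proof of Theorem \ref{char}, and conclude that $f$ is a dynamic equilibrium.
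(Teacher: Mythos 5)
Your Steps 1 and 3 match the paper's proof exactly: solve $\VI(K,\A)$ using Lemma \ref{timecontinuity} and \cite[Theorem 24]{Brezis}, and let $f$ be the network loading of the solution $h^*$ via Proposition \ref{proposition:constCaminos}. Where you diverge is in how the VI solution is converted into a dynamic equilibrium, and this is where the genuine gap lies.

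The paper does \emph{not} pass through your Step~2 (pointwise path optimality) nor attempt the global translation of your Step~4. Instead it argues by contraposition: assume $f$ is not a dynamic equilibrium, invoke the equivalence (a)$\Leftrightarrow$(c) of Theorem \ref{char} to produce a single time $\theta$ and a link $e=vw\notin E'_\theta$ with $f_e^+(\l_v(\xi))\l_v'(\xi)>0$ on a set of positive measure in every right-neighborhood $[\theta,\theta+\epsilon]$, pick a path $P\ni e$ carrying positive path-flow $h^*_P$ there together with a path $P'$ consisting entirely of active edges, and then show that transferring $h^*_P$ onto $P'$ on that set strictly decreases $\langle \A h^*,\cdot\rangle$, contradicting the VI. The key economy is that only \emph{one} direction of the path-level/edge-level relation is needed, and only for the single offending edge and interval: one extracts a witness path that is not shortest yet carries flow.

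Your Step~4, by contrast, asks for the full implication ``pointwise path optimality of $h^*$ $\Rightarrow$ the edge-level condition of Definition \ref{dyneq}/Theorem \ref{char}(c) holds for every edge.'' You correctly identify the two obstructions --- non-strict monotonicity of the compositions $T_{e_k}\circ\cdots\circ T_{e_1}$, and the plateaus of $\l_v$ that make the image sets $\l_v(\R\setminus\Theta_e)$ and $\l_v(\Theta_e)$ overlap --- and you correctly name the tools (Lemma \ref{sorgenfrey}, the change-of-variable formula, the chain rule with $T'_e=0\Rightarrow f_e^+=0$). But you do not actually carry out the argument, and it is not a routine filling-in: one must relate, through the transfer equations \eqref{salida}, the $h^*$-measure of $\{\theta:h^*_P(\theta)>0,\ P\ \text{shortest at}\ \theta\}$ to the a.e.\ support of $f_{P,e}^+$ on $\operatorname{range}(\l_v)$, while controlling the (countable) set of plateau values of $\l_v$ on which $\l_v(\Theta_e)$ and $\l_v(\R\setminus\Theta_e)$ overlap. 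That is essentially a self-contained lemma equating the path-flow and edge-flow formulations of equilibrium under network loading, which the paper deliberately avoids having to prove in full by running the argument in the contrapositive direction. As written, Step 4 is therefore a genuine gap: the ideas are right, but the proof is not there, and the route you chose makes the hole wider than it needs to be.
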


\begin{proof} According to Lemma \ref{timecontinuity} the map $h\mapsto \A(h)$
is weak-strong continuous from $K$ to $X^*$ so that the variational inequality $\VI(K,\A)$
has a solution $h\in K$. We claim that the corresponding flow-over-time $f$ 
given by Proposition  \ref{proposition:constCaminos} is a dynamic equilibrium. 
If not, by Theorem \ref{char} we may find $\theta>0$ and a link $e=vw\not\in E'_\theta$ such that for all $\epsilon>0$ 
we have $f_e(\l_v(\xi))\l_v'(\xi)>0$ on a subset of positive measure in $[\theta,\theta+\epsilon]$. 
Choose $\epsilon$ small enough so that $E'_\xi$ decreases on $[\theta,\theta+\epsilon]$ and 
choose $P\in \P$ with $e\in P$ and $h_P(\xi) >0$ on a subset $I\subseteq[\theta,\theta+\epsilon]$ 
with positive measure. Take $P'\in\P$ with all links in $E'_{\theta+\epsilon}$ so that $P'$ is optimal 
at each $\xi\in[\theta,\theta+\epsilon]$, and let $h' \in K$ be identical to $h$ except for $\xi\in I$ 
where we transfer flow from $P$ to $P'$, that is $h'_P(\xi)=0$ and $h'_{P'}(\xi)=h_{P'}(\xi)+h_P(\xi)$. 
Then we have
$$0\leq \left\langle \A h,h'-h \right\rangle=\int_{[0,T]}\langle\A h(\xi),h'(\xi) - h(\xi)\rangle d\xi
=\int_I(\l_t(\xi)-\l^P\!(\xi))h_P(\xi)d\xi.$$
Since $e\not\in E'_\theta$ it follows that $e\not\in E'_\xi$ for $\xi\in I$ so that
$\l_t(\xi)<\l^P\!(\xi)$ which yields a contradiction.
\end{proof}

\subsection{Extension to multiple origin-destinations}\label{multiOD}
The extension to multiple origin-destinations is straightforward. 
For each pair $st\in N\times N$ let $u_{st}\in L^p(0,T)$ be the corresponding 
inflow (possibly zero) and let $\P_{st}$ be the set of $s$-$t$ paths 
assumed nonempty if $u_{st}$ is nonzero. 
A feasible flow-over-time is now a family of inflows  $f_e^+=\sum_{st}f^+_{e,st}$ and 
outflows $f_e^-=\sum_{st}f^-_{e,st}$ satisfying the capacity and non-deficit constraints,
together with the flow conservation equations for each $st$ pair
\begin{align}\label{eq:flow_consst}
\sum_{e \in \delta^+(v)} f^+_{e,st} (\theta) -\!\!\!\!\sum_{e \in \delta^-(v)} f^-_{e,st} (\theta) 
=\left\{\begin{array}{cl}
u_{st}(\theta)&\quad\mbox{ for $v=s$ }\\[1ex]
0&\quad\mbox{ for $v \in V \setminus \{ s,t \}$}.
\end{array}
\right.
\end{align}
The definitions of queue lengths, link travel times, and path travel times
remain unchanged, and we only need to introduce the origin-destination optimal times
$$\l_{st}(\theta)=\min_{P\in\P_{st}}\l^P\!(\theta).$$
Dynamic equilibrium holds when for each pair $st$ and all $e=vw\in E$ we have
$f_{e,st}^+(\xi)=0$ for almost all $\xi\in \l_{sv}(\R\setminus\Theta^s_e)$ where
 $\Theta_e^s$ denotes the set of all times $\theta$ at which link $e=vw$ is active
 for origin $s$, namely $\l_{sw}(\theta)=T_e(\l_{sv}(\theta))$. 

Denoting $\P$ the union of all the $\P_{st}$'s, the network loading procedure
in \S\ref{netloading} remains unchanged as it did not depend on having a single 
origin-destination pair. Also the results 
in \S\ref{contimes} are easily extended by considering $K$ as the set of 
path-flows 
$h=(h_P)_{P\in\P}\in L^p(0,T)^\P$ which are non-negative and that satisfy flow conservation for
each pair $st$, that is
$$ \sum_{P \in \P_{st}} h_P=u_{st}.$$
For the bound $\bar z=\int_0^Tu(\xi)\,d\xi$ of the queue lengths in Lemma \ref{acota} 
it suffices to take $u$ as the sum of all the $u_{st}$'s. With these preliminaries,
the proof of Theorem \ref{teo:Lp} is readily adapted to establish the existence of a
dynamic equilibrium for multiple origin-destinations.
\begin{theorem}\label{teo:LpOD}
Let $u_{st}\in L^p(0,T)$  with $1<p<\infty$ the inflows for multiple origin-destination pairs
$st\in N\times N$, and assume that $\tau_e>0$ on every link $e$. 
Then there exists a dynamic equilibrium.
\end{theorem}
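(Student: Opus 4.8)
The plan is to repeat the proof of Theorem~\ref{teo:Lp} almost verbatim, replacing the single path set by the union $\P=\bigcup_{st}\P_{st}$ and the single flow-conservation constraint by one constraint per origin-destination pair. Concretely, take $X=L^p(0,T)^\P$ with dual $X^*=L^q(0,T)^\P$, $\frac1p+\frac1q=1$, and let $K$ be the set of non-negative path-flows $h=(h_P)_{P\in\P}\in X$ satisfying $\sum_{P\in\P_{st}}h_P=u_{st}$ for each pair $st$. As in \eqref{K}, this $K$ is nonempty (because $\P_{st}\neq\emptyset$ whenever $u_{st}\not\equiv0$), bounded, closed and convex. Define $\A:K\to X^*$ by letting $\A_P(h)\in L^q(0,T)$ be the travel-time function $\theta\mapsto\l^P\!(\theta)-\theta$ of path $P$ in the network loading induced by $h$.

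First I would observe that none of \S\ref{netloading}--\S\ref{contimes} used the single-pair hypothesis: Proposition~\ref{proposition:constCaminos} still assigns a unique network loading $\omega$ to each $h\in K$; with $u=\sum_{st}u_{st}$ and $\bar z=\int_0^Tu(\xi)\,d\xi$ the argument of Lemma~\ref{acota} still gives a common compact support $[0,M]$ for all the flows; and Lemmas~\ref{proposition:defFmenos}--\ref{timecontinuity} carry over unchanged, so $h\mapsto\A(h)$ is weak-strong continuous from $K\subseteq X$ to $X^*$. Hence \cite[Theorem~24]{Brezis} produces a solution $h\in K$ of $\VI(K,\A)$, and I would take $f$ to be the corresponding network loading.

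It then remains to show that $f$ is a dynamic equilibrium, i.e.\ that for each pair $st$ and each $e=vw$ one has $f^+_{e,st}(\xi)=0$ for almost all $\xi\in\l_{sv}(\R\setminus\Theta_e^s)$. For this I would first record the multi-OD analogue of Theorem~\ref{char}, obtained by repeating its proof for a fixed pair $st$, using the per-pair throughput identity that comes from summing the link transfer equations~\eqref{salida} over the paths of $\P_{st}$ through $e$: equilibrium for $st$ is equivalent to $\theta\notin\Theta_e^s\Rightarrow f^+_{e,st}(\l_{sv}(\theta))\,\l_{sv}'(\theta)=0$ for almost all $\theta$. If this failed for some pair $st$, some $e=vw$ and some time $\theta$ with $e$ inactive for $s$, then exactly as in the proof of Theorem~\ref{teo:Lp} there would be an interval $I\subseteq[\theta,\theta+\epsilon]$ of positive measure, with the links active for $s$ non-increasing on $[\theta,\theta+\epsilon]$, on which some $P\in\P_{st}$ with $e\in P$ has $h_P>0$. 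Choosing $P'\in\P_{st}$ built only from links active for $s$ at time $\theta+\epsilon$ --- hence optimal for the pair $st$ throughout $[\theta,\theta+\epsilon]$ --- and letting $h'\in K$ agree with $h$ except on $I$, where all of $h_P$ is transferred to $h_{P'}$, feasibility of $h'$ holds because $h'-h$ is supported on the coordinates $P,P'\in\P_{st}$, and the variational inequality gives
$$0\le\langle\A h,\,h'-h\rangle=\int_I\bigl(\l^{P'}\!(\xi)-\l^P\!(\xi)\bigr)\,h_P(\xi)\,d\xi.$$
Since $\theta\notin\Theta_e^s$ forces $\xi\notin\Theta_e^s$ for $\xi\in I$, there $\l^{P'}\!(\xi)=\l_{st}(\xi)<\l^P\!(\xi)$, a contradiction.

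The only genuinely new point --- and the one I expect to require the most care --- is the coupling among distinct origin-destination pairs: perturbing $h$ within $\P_{st}$ changes every link inflow $f_e^+$ and hence the travel times seen by \emph{all} pairs. This turns out to be harmless. For the existence step it suffices that $\VI(K,\A)$ is posed on the whole of $K$ and that $\A$ is still weak-strong continuous, both of which hold; for the contradiction it suffices that $h'-h$ has only the two nonzero coordinates $P,P'$, so that $\langle\A h,h'-h\rangle$ reduces to the single integral above regardless of how the other pairs' travel times move. Apart from this, every ingredient is either quoted from \S\ref{netloading}--\S\ref{exi} or a pair-by-pair transcription of the single origin-destination argument.
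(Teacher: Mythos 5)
Your proposal is correct and takes essentially the same route as the paper, which in \S\ref{multiOD} defines $K$ by per-pair conservation, observes that the network loading, the bound $\bar z=\int_0^T u$ with $u=\sum_{st}u_{st}$, and the continuity lemmas carry over, and then declares the proof of Theorem~\ref{teo:Lp} ``readily adapted.'' You simply spell out the adaptation the paper leaves implicit---in particular the per-pair throughput identity obtained by summing \eqref{salida} over $P\in\P_{st}$ with $e\in P$, and the observation that $h'-h$ being supported on two coordinates of the same pair $st$ makes the cross-pair coupling harmless in the variational inequality.
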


\section{Concluding remarks}\label{related-work}
Although dynamic traffic assignment has received considerable attention since the seminal work 
by Merchant and Nemhauser \cite{mer,mer1}, the existence and characterization of dynamic equilibria 
still poses many challenging questions. For a review of the literature and open problems we 
refer to Peeta and Ziliaskopoulos \cite{pep}. 
Several of the previous studies have relied on a strict FIFO condition
that requires the exit time functions $T_e(\cdot)$ to be strictly increasing. 
For instance, Friesz {\em et al.} \cite{fri} consider a situation in which 
users choose simultaneously route and departure time, with link travel times 
specified as $D_e(y_e)=\alpha_e y_e+\beta_e$ where $y_e=F_e^+(\theta)-F_e^-(\theta)$ is the 
total flow on link $e$ at time $\theta$ and $\alpha_e, \beta_e$ are strictly positive constants. 
Strict FIFO was shown to hold for such linear volume-delay functions, which allowed to characterize 
the equilibrium by a variational inequality, though no existence result was given. 
Strict FIFO was also used by Xu {\em et al.}  \cite{XuF} to investigate the network loading problem, namely, to 
determine the link volumes and travel times that result from a given set of path flow 
departure rates. Shortly after, the existence of equilibria was established by Zhu and Marcotte \cite{Zhu} 
under a strong FIFO condition that holds for linear volume-delay functions (even in the case
$\alpha_e=0$), assuming in addition that inflows are uniformly bounded.

Unfortunately, as illustrated by the Example in section \S\ref{subsection:eq_def}, strict FIFO does not 
hold in our framework and these previous results do not apply. This is somewhat surprising since we also 
consider linear travel times. The subtle difference is that we consider the queue size $z_e$ instead of 
the total volume $y_e$ on the link. Note that the fluid queueing model
could be cast into the linear volume-delay framework by decomposing each link into a pure 
queueing segment with travel time $z_e/\nu_e$ (that is, $\alpha_e=1/\nu_e$, $\beta_e=0$), 
followed by a link with constant travel time $\tau_e$ (that is, $\alpha_e=0$, $\beta_e=\tau_e$).
Strict FIFO fails precisely because the queueing segment has $\beta_e=0$.
In this respect it is worth noting that our existence results 
do not require strict FIFO, as long as $\tau_e>0$, and Theorem \ref{Teo-existence} holds even
if $\tau_e=0$ on some links.

A general existence result for dynamic network equilibrium beyond strict FIFO 
was recently presented by Meunier and Wagner \cite{meu}. Their model considers 
both route choice and departure time choice, and is based
on a weak form of strict FIFO: the travel time $T_e(\cdot)$ strictly 
increases on any interval on which there is some inflow into the link. 
This weaker property does hold in our context and the result applies provided that 
the inflow $u(\cdot)$ belongs to $L^\infty_{\rm loc}(\R)$.

An interesting feature of the approach in \S\ref{section:EU}, compared with previous existence results,
is that it provides a way to construct the equilibrium. In this respect, our work owes much to
Koch and Skutella \cite{KochSk:2}. There are however several differences. 
On the modeling side, we distinguish the notion of dynamic equilibrium from the stronger dynamic 
equilibrium condition (see Remark 1). Both concepts were used interchangeably in \cite{KochSk:2}, 
although they might differ as shown in the Example in \S\ref{subsection:eq_def}. In particular, our 
Theorem \ref{char} precises \cite[Theorem 1]{KochSk:2} which characterizes dynamic equilibria,
not strong equilibria. 
Also, Theorem \ref{Th:K&S} is an
 extension of  \cite[Theorem 2]{KochSk:2} that applies to the larger class of dynamic 
equilibria and provides a sharper conclusion by including the normalization condition.
The existence and uniqueness for NTF's in Theorems \ref{teo:existTF} and \ref{teo:uniTF} are 
new, and so is the subsequent existence and uniqueness of a dynamic equilibrium  in
Theorems \ref{Teo-existence} and \ref{Teo-uniqueness}.
To the best of our knowledge, the latter uniqueness result has not been observed 
previously in the literature. 

The algorithmic approach in \S\ref{section:EU} raises a number of questions. 
On the one hand, it would be relevant to know if the step sizes computed 
in step (2) of the extension algorithm are bounded away from 0. In this case
the $\theta_k$'s would not accumulate and the equilibrium would be computed in 
finitely many steps for any given horizon $T$. A related question is whether a steady 
state could eventually be attained with $\alpha=\infty$ at some iteration, in which case the
algorithm would be finite.
A weaker but still difficult conjecture is whether the queue sizes $z_e(\cdot)$ remain bounded
as long as the capacity of any $s$-$t$ cut is large enough, for instance larger than the inflow at 
any point in time. The 
difficulty for proving such a claim is that the flow across a cut can be arbitrarily larger than 
the inflow: the queueing processes might introduce delay offsets in such a way that the flow entering the network 
at different times reaches the cut simultaneously at a later date, causing a superposition of flows that
exceeds the capacity of the cut. On the other hand, while it is easy to give a finite algorithm 
to compute thin flows with resetting, the computational complexity of the problem remains 
open. A polynomial time algorithm for this would imply that for piecewise constant inflows 
one could compute a dynamic equilibrium in polynomial time (in input plus output).

Another interesting question is whether the constructive approach in \S\ref{section:EU} can be
adapted to deal with more general inflows $u(\theta)$.  More precisely, let $N(\l,u_0)$ denote the unique labels in 
a normalized thin flow  of value $u_0$ with resetting on the set $E^*$ of all links $e=vw$ with 
$\l_w>\l_v+\tau_e$, and $E'$ the set of links with $\l_w\geq\l_v+\tau_e$ (see Proposition \ref{lemma:rightCont}).
Recalling Theorems \ref{Th:K&S} and
\ref{teo:uniTF},  an equilibrium could be computed by solving the system of ordinary 
differential equations
$$
\begin{array}{l}
\l'(\theta)=N(\l(\theta),u(\theta))\\
\end{array}
$$
with initial condition $\l_v(0)$ equal to the minimum $s$-$v$ travel time with empty queues.
The cumulative flows $x_e(\theta)$ could then be recovered by integrating a measurable selection
of the corresponding thin flows. 
The main difficulty here is that the map $N$ is discontinuous in $\l$ so that
the standard theory and algorithms for ODE's do not apply directly.
A final open problem is to extend the constructive approach to multiple 
origin-destinations.

\section{Appendix: The spaces $\Lp$ and $\AC$}
\label{L1AC}
We denote $\Lp$ the vector space of measurable functions $g:\R\to\R$
such that $|g(\cdot)|^p$ is integrable on every bounded interval. 
Similarly,  $\AC$ is the vector space of functions $h:\R\to\R$ that are 
absolutely continuous on every bounded interval. For a thorough study of absolutely continuous 
functions we refer to \cite[Chapter 3]{leo}. Here we just summarize a few 
facts required in our analysis:
\begin{itemize}
\item For all $1\leq p\leq \infty$ we have $\Lp\subseteq\Ll$.
\item The primitive of any  $g\in\Ll$ belongs to $\AC$.
Conversely, every $h\in\AC$ is differentiable almost everywhere with $h'\in\Ll$ and 
$$h(\theta)=h(0)+\mbox{$\int_0^\theta h'(\xi)d\xi.$}$$
\item If $f,g\in \AC$ then their product $fg$ and minimum $\min\{f,g\}$ are also in $\AC$. 
\item If $f,h\in \AC$ we do not necessarily have $f\circ h\in\AC$, but this holds
if either  $f$ is Lipschitz or $h$ is monotone. In both cases the following chain rule holds for almost all $y\in\R$
$$(f\circ h)'(y)=f'(h(y))h'(y).$$
\item In particular, if $h\in\AC$ is monotone and $g\in\Ll$ we have the change of variable formula
$$\int_{h(a)}^{h(b)}\!\!\!g(\xi)d\xi=\int_a^b \!\!g(h(y))h'(y)dy.$$
\end{itemize}

\vspace{1ex}
\noindent
The following are more specific properties for which we could not
find a reference, so we include a proof.
\begin{lemma}\label{sorgenfrey}
Let $g:\R\to \R_+$  be a nonnegative function in $\Ll$ and $\{(a_i,b_i)\}_{i\in I}$
a possibly uncountable family of intervals. Then $g$ vanishes almost everywhere on each 
$(a_i,b_i)$ iff it vanishes almost everywhere on $\cup_{i\in I}(a_i,b_i)$. The statement also holds
with the latter set replaced by $\cup_{i\in I}[a_i,b_i)$ or $\cup_{i\in I}(a_i,b_i]$.
\end{lemma}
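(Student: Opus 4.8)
The plan is to prove only the nontrivial implication: vanishing a.e.\ on every $(a_i,b_i)$ forces vanishing a.e.\ on the union. The reverse implication is immediate since each interval is contained in the union, and before starting I would discard all degenerate intervals so that $a_i<b_i$ throughout. The one real obstacle is that the index set $I$ may be uncountable, so the bad sets $N_i\subseteq(a_i,b_i)$ where $g\neq 0$ cannot simply be lumped together into a single null set; my strategy is to replace this uncountable union by a countable one at the cost of a negligible remainder, exploiting the fact that an open subset of $\R$ is a countable disjoint union of open intervals together with Heine--Borel compactness.

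First I would treat the open case. Set $U=\bigcup_{i\in I}(a_i,b_i)$, which is open, and write it as the countable disjoint union of its connected components, $U=\bigsqcup_n(c_n,d_n)$. Fixing a component $(c_n,d_n)$ and an arbitrary compact subinterval $[p,q]\subseteq(c_n,d_n)$, I note that $[p,q]\subseteq U$, so $\{(a_i,b_i)\}_{i\in I}$ covers $[p,q]$ and, by compactness, finitely many of them already do; since $g$ vanishes a.e.\ on each of those finitely many intervals it vanishes a.e.\ on their union, hence a.e.\ on $[p,q]$. Running $[p,q]$ over a countable exhaustion of $(c_n,d_n)$ gives $g=0$ a.e.\ on $(c_n,d_n)$, and a countable union over $n$ gives $g=0$ a.e.\ on $U$. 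This is really the crux of the argument.

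Next I would handle the half-open versions. For $E=\bigcup_{i\in I}[a_i,b_i)$, the hypothesis gives $g=0$ a.e.\ on each $(a_i,b_i)$, so the open case already yields $g=0$ a.e.\ on $U$; it remains to argue that $D:=E\setminus U$ is Lebesgue-null, and I would in fact show it is countable. If $x\in D$ then $x\in[a_j,b_j)$ yet $x\notin(a_j,b_j)$, so $x=a_j$; hence $(x,b_j)\subseteq U$ while $x\notin U$, which (as $U$ is open) means $x$ is the left endpoint of the component of $U$ containing $(x,b_j)$. Distinct points of $D$ are left endpoints of distinct components, and there are only countably many components, so $D$ is countable, hence null, and $g=0$ a.e.\ on $E=U\cup D$. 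The case $\bigcup_{i}(a_i,b_i]$ is symmetric, with $D$ now made of right endpoints of components of $U$. I expect no difficulty here beyond bookkeeping; the substantive obstacle throughout is simply the uncountability of the family, which the component-plus-compactness device neutralizes.
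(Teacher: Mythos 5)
Your proof is correct. The load-bearing idea coincides with the paper's --- compactness converts the uncountable cover into finite subcovers, which is where the argument must escape the uncountability --- but the technical packaging is different. The paper works with the measure $\mu(A)=\int_A g(\xi)\,d\xi$ and invokes its inner regularity, $\mu(\Theta)=\sup\{\mu(K):K\text{ compact},\,K\subseteq\Theta\}$, then takes a finite subcover of each compact $K$; you instead decompose $U$ into its (countably many) connected components and exhaust each component by compact subintervals, which sidesteps any explicit appeal to regularity of the integral measure. Your treatment of the half-open case also differs: you identify the exceptional set $D=E\setminus U$ with left endpoints of the components of $U$, while the paper argues that distinct exceptional endpoints carry pairwise non-overlapping intervals $(a_i,b_i)$ and hence there are countably many. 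Both routes are sound; yours is a bit more elementary and self-contained, the paper's a bit more compact since it never mentions connected components.
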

\proof Assume with no loss of generality that all intervals are nonempty.
Since $\mu(A)=\int_Ag(\xi)d\xi$ defines a regular measure on the Borel sets $A\subseteq \R$,
for $\Theta=\cup_{i\in I}(a_i,b_i)$ we have
$$\mu(\Theta)=\sup\{\mu(K):K\mbox{ compact }, K\subseteq\Theta\}.$$
Now, each compact $K\subseteq\Theta$ has a finite subcover $K\subseteq \cup_{k=1}^n (a_{i_k},b_{i_k})$
so that 
$$\mu(K)\leq\sum_{k=1}^n\mu((a_{i_k},b_{i_k}))=\sum_{k=1}^n\int_{a_{i_k}}^{b_{i_k}}\!\!\!g(\xi)d\xi=0.$$
It follows that $\mu(\Theta)=0$ which implies that $g(\xi)=0$ for almost all $\xi\in\Theta$ 
and proves the first statement. 

The other claims follow since all three unions differ in countably many elements. Indeed, consider for instance 
the set $N=\cup_{i\in I}[a_i,b_i)\setminus \cup_{i\in I}(a_i,b_i)$. Each point $z\in N$ must be an endpoint
$z=a_i$ with the corresponding interval 
$(a_i,b_i)$ disjoint from $N$.  It follows that if $a_j\in N$ is another such point,
the corresponding intervals cannot overlap, and therefore there can be at most countably many.
A similar argument shows that  $\cup_{i\in I}(a_i,b_i]\setminus \cup_{i\in I}(a_i,b_i)$
is countable. 
\endproof

\noindent{\sc Remark.}
Lemma \ref{sorgenfrey} does not hold for closed intervals $\cup_{i\in I}[a_i,b_i]$.
In fact,  every function $g$ vanishes almost everywhere on each  
interval $[x,x]$ for $x\in\R$, but not necessarily on $\cup_{x\in\R}[x,x]=\R$.
\vspace{1ex}

\begin{lemma}\label{lemma:sac}
Let $z\in\AC$ with $z(\theta)=0$ for $\theta<0$. Then the following are equivalent
\begin{itemize}
\item[(a)] $z(\theta)\geq 0$ for all $\theta$,
\item[(b)] $z(\theta)\leq 0\Rightarrow z'(\theta)\geq 0$ for almost all $\theta$,
\item[({}c)] $z(\theta)\leq 0\Rightarrow z'(\theta)=0$ for almost all $\theta$.
\end{itemize}

\end{lemma}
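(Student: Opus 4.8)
The plan is to prove the three equivalences in Lemma \ref{lemma:sac} by a cyclic argument, establishing (a)$\Rightarrow$(c)$\Rightarrow$(b)$\Rightarrow$(a). The implications (c)$\Rightarrow$(b) is trivial, so the real content is in (a)$\Rightarrow$(c) and (b)$\Rightarrow$(a).

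For (a)$\Rightarrow$(c): Assume $z\geq 0$ everywhere. The set $\{z\leq 0\}$ then coincides (up to the point where equality holds) with the zero set $Z=\{\theta:z(\theta)=0\}$. The key classical fact is that for an absolutely continuous function, the derivative vanishes almost everywhere on any level set: more precisely, $z'(\theta)=0$ for almost every $\theta\in Z$. This follows because $z$ maps the measurable set $Z$ onto the single point $\{0\}$, which has measure zero, and an $\AC$ function satisfies Lusin's property (N); combining this with the fact that $\int_Z |z'|\,d\theta$ equals the total variation of $z$ restricted to $Z$ (by absolute continuity), which is bounded by the measure of $z(Z)=\{0\}$, gives $z'=0$ a.e.\ on $Z$. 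Alternatively one can argue directly: at a.e.\ point of $Z$ the function $z$ is differentiable, and since $z$ attains its minimum value $0$ at each such point (as $z\geq 0$), the derivative must be $0$ there by the usual first-order condition for interior minima, using that a.e.\ point of a measurable set is a point of density.

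For (b)$\Rightarrow$(a): Suppose (b) holds but $z(\theta_0)<0$ for some $\theta_0>0$ (recall $z\equiv 0$ on the negative axis). Let $\theta_1=\sup\{\theta\leq\theta_0: z(\theta)\geq 0\}$, which is well-defined since $z(0)=0$, and by continuity $z(\theta_1)=0$ and $z(\theta)<0$ on $(\theta_1,\theta_0]$. On this interval, hypothesis (b) gives $z'(\theta)\geq 0$ for almost all $\theta$, so integrating, $z(\theta_0)=z(\theta_1)+\int_{\theta_1}^{\theta_0}z'(\xi)\,d\xi\geq 0$, contradicting $z(\theta_0)<0$.

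The main obstacle is making (a)$\Rightarrow$(c) fully rigorous, i.e.\ justifying that the derivative of an absolutely continuous function vanishes a.e.\ on its zero set. This is a standard measure-theoretic fact, but it does require either invoking Lusin's property (N) for $\AC$ functions together with the change-of-variables/area formula, or the density-point argument sketched above; I would use the density-point argument since it is elementary and self-contained. Everything else is a routine application of the fundamental theorem of calculus for $\AC$ functions, which is available from the appendix facts listed above.
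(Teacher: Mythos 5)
Your proof is correct and, while closely related, takes a genuinely cleaner route on the harder direction. For (b)$\Rightarrow$(a) you and the paper argue identically: let $\theta_1$ be the last time $z$ vanishes before $\theta_0$, use (b) to get $z'\geq 0$ a.e.\ on $(\theta_1,\theta_0)$, and integrate via the fundamental theorem for $\AC$ functions to reach a contradiction. The difference lies in (a)$\Rightarrow$(c): the paper first proves (a)$\Rightarrow$(b) by the one-sided first-order condition, and then upgrades $z'\geq 0$ to $z'=0$ a.e.\ on $\{z\leq 0\}$ by showing that the exceptional set $A=\{\theta:z(\theta)=0,\;z'(\theta)>0\}$ spawns pairwise-disjoint intervals $(\theta,\theta+\varepsilon)$ disjoint from $A$, so $A$ is countable, hence null. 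You instead apply the \emph{two-sided} first-order condition directly: if $z\geq 0$ everywhere, then every $\theta$ with $z(\theta)=0$ is a global minimum, so at any such $\theta$ where $z'(\theta)$ exists (a.e.\ by $\AC$) one has $z'(\theta)=0$. This is more direct and actually shows the exceptional set is empty rather than merely countable. One small remark: your appeal to density points is superfluous — since $z\geq 0$ on all of $\R$, every zero is a global minimum, and the usual Fermat argument needs no density hypothesis. Likewise, the alternative via Lusin's property (N) and what is really the Banach indicatrix theorem is correct in spirit but stated loosely (``total variation restricted to $Z$'' is not the precise quantity); the elementary first-order argument you elect to use is both simpler and self-contained, and is the better choice.
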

\proof Let $N$ be a null set such that $z'(\theta)$ exists for all $\theta\not\in N$.\\[0.5ex]
{\bf [(a)$\Leftrightarrow$({}b)]} Under (a), for all $\theta\not\in N$ with $z(\theta)\leq 0$ we have $z(\theta)=0$
so that $z'(\theta)\geq 0$ which gives (b).
Conversely, suppose that (b) holds but $z(\theta)<0$ for some $\theta$, and consider the
smallest $\theta'$ such that $z(\cdot)$ remains negative on $(\theta',\theta]$. Then $z(\theta')=0$ while (b) implies $z'(\xi)\geq0$ 
for almost all $\xi\in(\theta',\theta)$ from which we get the contradiction
$0>z(\theta)=z(\theta')+\int_{\theta'}^{\theta}z'(\xi)d\xi\geq 0$. \\[0.5ex]
{\bf [({}c)$\Leftrightarrow$(a)]} Clearly ({}c) implies (b) which in turn implies (a).
Conversely, since (a) implies (b), it suffices to show that the 
set $A=\{\theta\not\in N:z(\theta)\leq 0; z'(\theta)>0\}$ is  countable. Indeed, for each $\theta\in A$ we have $z(\theta)=0$
and we may find $\epsilon>0$ such that  $z(\theta')>0$ for all $\theta'\in I_\theta=(\theta,\theta+\epsilon)$. These intervals 
$I_\theta$ do not meet $A$ so they cannot overlap, and therefore there can be at most countably many.
\endproof

\bigskip
\noindent{\bf Acknowledgements.} We thank Martin Skutella for many stimulating discussions on dynamic equilibrium, and to Ra\'ul
Gouet for his valuable assistance with the measure theoretic aspects involved in this study.

\end{document}